\documentclass[12pt]{article}
\usepackage{amsmath,amssymb,amsthm}
\usepackage[margin=1in]{geometry}
\usepackage{cite}
\usepackage{hyperref}

\newtheorem{theorem}{Theorem}[section]
\newtheorem{lemma}[theorem]{Lemma}
\newtheorem{corollary}[theorem]{Corollary}

\newtheorem{claim}[theorem]{Claim}

\newtheorem{conjecture}[theorem]{Conjecture}

\newtheorem{observation}[theorem]{Observation}

\newtheorem{remark}[theorem]{Remark}

\title{A sharp extension of Halin’s removable-edge theorem to matchings}

\author{Hojin Chu\thanks{Supported by a KIAS Individual Grant (CG101801) at
Korea Institute for Advanced Study.}\\
\small School of Computational Sciences, Korea Institute for Advanced Study
(KIAS), Seoul, Korea\\
\small \texttt{hojinchu@kias.re.kr}}

\date{\today}

\begin{document}
\maketitle

\begin{abstract}
A subgraph $H$ of a $k$-connected graph $G$ is called \emph{$k$-removable} if $G-E(H)$ remains $k$-connected.
Halin proved that every $k$-connected graph $G$ with $\delta(G)\ge k+1$ has a $k$-removable edge. 
We extend this result from a single edge to matchings of any prescribed size by showing that, for positive integers $k$ and $m$, every $k$-connected graph $G$ with $\delta(G)\ge\max\{k+1,2m-2\}$ contains a $k$-removable matching of size $m$, unless $G\cong K_{2m-1}$, or $(k,m)=(1,2)$ and $G$ is a cycle. 
This confirms a conjecture of Li, Zhou, Fujita, and Mao. 
The minimum degree bound is sharp, and both exceptions are unavoidable.
Consequently, $\max\{k+1,2m-1\}$ is the sharp minimum degree threshold guaranteeing such a matching without exceptions.
The proof combines a prescribed-set strengthening of Halin's removable-edge theorem with an extremal analysis of maximum $k$-removable matchings.
\end{abstract}

\section{Introduction}
All graphs considered in this paper are finite, undirected, and simple.
For a graph $G$, let $V(G)$, $E(G)$, $\delta(G)$, and $\kappa(G)$ denote its vertex set, edge set, minimum degree, and connectivity, respectively.
For $X\subseteq V(G)$, let $G[X]$ denote the subgraph of $G$ induced by $X$, and write $G-X=G[V(G)\setminus X]$.
For $F\subseteq E(G)$, let $G-F$ denote the graph obtained from $G$ by deleting the edges in $F$.

A central problem in connectivity-keeping subgraph deletion is to determine minimum degree conditions under which prescribed subgraphs can be deleted while preserving connectivity.
In the edge-deletion setting, a subgraph $H$ of a $k$-connected graph $G$ is called \emph{$k$-removable} if $G-E(H)$ remains $k$-connected.
We focus on the case where $H$ is a matching; such an $H$ is called a \emph{$k$-removable matching}.
For a recent survey, see~\cite{TianMeng26}; for results on vertex-deletion, see~\cite{CKL72, Mader10, Mader12, HongLiu21}.

No edge of a $k$-regular $k$-connected graph is $k$-removable.
Consequently, a minimum degree condition of at least $k+1$ is necessary for the existence of a $k$-removable edge.
Halin~\cite{Halin69} proved that this condition is also sufficient.

\begin{theorem}[Halin~\cite{Halin69}]\label{thm:Halin}
    Let $k$ be a positive integer. Every $k$-connected graph $G$ with $\delta(G)\ge k+1$ contains a $k$-removable edge.
\end{theorem}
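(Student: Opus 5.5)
We argue by contradiction, taking a $k$-connected graph $G$ with $\delta(G)\ge k+1$ in which no edge is $k$-removable. For $k=1$ this is quick: $G$ is connected with $\delta(G)\ge 2$, so it has a cycle, and removing any cycle edge leaves $G$ connected. Hence we may assume $k\ge 2$. Since $G-e$ is not $k$-connected for any edge $e=xy$, Menger's theorem (or a direct separation argument) produces a set $S_e\subseteq V(G)\setminus\{x,y\}$ with $|S_e|=k-1$ that separates $x$ from $y$ in $G-e$. Thus $V(G)\setminus S_e$ splits into two nonempty sides $A_e\ni x$ and $B_e\ni y$, with $e$ the only edge between them. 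We call $(A_e,S_e,B_e)$ an \emph{$e$-separation} and choose, over all edges $e$ and all $e$-separations, one whose side $A=A_e$ (the side containing $x$) has minimum order.

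The first step establishes $|A|\ge 2$. If $A=\{x\}$, every neighbour of $x$ apart from $y$ lies in $S_e$, which gives $\deg(x)\le |S_e|+1=k$ and contradicts $\delta(G)\ge k+1$. So we may pick an edge $f=xz$ with $z\in A$ or $z\in S_e$. In fact $x$ has at least $k+1-(k-1)-1=1$ neighbour in $A$ besides itself, so we take $z\in A$. We then fix an $f$-separation $(A',S',B')$ with $x\in A'$ and $z\in B'$.

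The core of the argument, and the main obstacle, is the standard crossing analysis of the two separations. The sets $A,S,B$ and $A',S',B'$ cut $V(G)$ into nine cells, and we want to show that some corner, for instance $A\cap A'$ or $A\cap B'$, together with the appropriate parts of $S\cup S'$, gives an $e$- or $f$-separation whose small side lies strictly inside $A$, contradicting minimality. I expect the bookkeeping to run as follows. Consider a corner such as $A\cap B'$ (which contains $z$) and let $T=(A\cap S')\cup(S\cap S')\cup(S'\cap A)$, adjusted suitably. Then $T$ separates the corner from the rest, apart from at most one edge. Combining $|S|+|S'|=2k-2$ with $k$-connectivity, meaning every true separator has size at least $k$, lets us balance the opposite corner $B\cap A'$ (or $B\cap B'$), which is nonempty because $y\in B$, against this corner. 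That balance forces the neighbourhood boundary of the corner to have size at most $k-1$ once the single edges $e$ and $f$ are accounted for. The delicate points are, first, tracking where $e$ and $f$ lie, given that $x\in A\cap A'$, $y\in B\cap(A'\cup S'\cup B')$ and $z\in A\cap B'$. Second, we must treat the degenerate case where the corner $A\cap B'$ equals $\{z\}$, and here the degree bound again gives $\deg(z)\ge k+1$, which rules it out. Third, we must ensure that the resulting small side avoids $y$, so that it is strictly contained in $A$.

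To streamline this, we first try to place $y$ within the second separation and then choose which corner to use, namely $A\cap B'$ or $A\cap A'$ minus $x$, according to the location of $y$ and whether $A\cap S'$ is large. Whichever corner we use, it is a proper subset of $A$ because it misses $x$ or $z$. This yields a separation with a smaller side and contradicts the choice of $(A_e,S_e,B_e)$. So some edge is $k$-removable, which proves Theorem~\ref{thm:Halin}.
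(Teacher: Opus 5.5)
\textbf{There is a genuine gap: the crossing step is only announced, and the justification you sketch for it fails in one case.} Your strategy is the classical one and can be completed: take a side $A$ of minimum order among all edge-separations, and cross it with an $f$-separation $(A',S',B')$ for an edge $f=xz$ with $z\in A$.

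Here is what the step should look like.
\begin{itemize}
\item Since $xz$ is the only $A'$--$B'$ edge and $y\neq z$, we have $y\notin B'$, so $y\in A'\cup S'$.
\item In $G-f$ the corner $A\cap B'$ has all its neighbours in $T=(A\cap S')\cup(S\cap S')\cup(S\cap B')$. Your formula lists $A\cap S'$ twice and omits $S\cap B'$.
\item When $y\in S'$, use the corner $A\cap A'$ itself (it misses $z$; do not delete $x$). In $G-f$ it has all its neighbours in $T^{*}=(A\cap S')\cup(S\cap S')\cup(S\cap A')\cup\{y\}$.
\item So either $|T|\le k-1$ or $|T^{*}|\le k-1$ gives an $f$-separation with a side strictly inside $A$.
\item If $B\cap A'\neq\emptyset$, then $(B\cap S')\cup(S\cap S')\cup(S\cap A')\cup\{x\}$ separates it from $z$. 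Now $k$-connectivity and $|S|+|S'|=2k-2$ give $|T|\le k-1$.
\item If $y\in S'$ and $B\cap B'\neq\emptyset$, then $(B\cap S')\cup(S\cap S')\cup(S\cap B')$ separates it from $x$, giving $|T^{*}|\le k-1$.
\end{itemize}

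\textbf{The missing case is $y\in S'$ and $B\subseteq S'$.} Your assertion that the opposite corner is nonempty ``because $y\in B$'' is false here. Both $B\cap A'$ and $B\cap B'$ are empty, so $k$-connectivity yields no inequality.

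A different input is needed: minimality of $|A|$ against the other side $B$ of the same $e$-separation, i.e.\ $|A|\le|B|$. This is why the minimum must range over both sides.
\begin{itemize}
\item Suppose $|T|\ge k$ and $|T^{*}|\ge k$. Using $|S'|=|A\cap S'|+|S\cap S'|+|B|$, you get $|S\cap B'|\ge|B|+1$ and $|S\cap A'|\ge|B|$.
\item Hence $|S\cap S'|\le k-2-2|B|$, and so $|A\cap S'|\ge|B|+1$.
\item But $x,z\notin S'$ gives $|A\cap S'|\le|A|-2\le|B|-2$, a contradiction.
\end{itemize}
Without this, or an equivalent argument, the proof is incomplete. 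Your other two ``delicate points'' are non-issues: corners inside $A$ automatically avoid $y$, and a side $\{z\}$ already contradicts minimality since $|A|\ge 2$.

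\textbf{Comparison with the paper.} The paper does not reprove Theorem~\ref{thm:Halin}. It is the case $W=\emptyset$ of Theorem~\ref{thm:prescribed-set-removable-edge}, which rests on Mader's Theorem~\ref{thm:mader}. If every edge were $k$-essential, every cycle would contain a vertex of degree $k$, so $G$ would be a forest and would have a vertex of degree at most $1$. Once repaired, your route is self-contained and avoids Mader's theorem, at the cost of the crossing analysis.
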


Hasunuma~\cite{Hasunuma23} proposed the following edge-deletion analogue of Mader's conjecture: for every tree $T$ of order $m$, every $k$-connected graph $G$ with $\delta(G)\ge k+m-1$ contains a copy $T'\cong T$ such that $G-E(T')$ remains $k$-connected.
Hasunuma proved the conjecture for $k\le 2$. The case $k=3$ was subsequently proved by Liu, Liu, and Hong~\cite{LLH23} and independently by Yang and Tian~\cite{YangTian24}.
Clay and Jord\'{a}n~\cite{CJ26+} recently proved both the conjecture and its edge-connectivity analogue.

Li, Zhou, Fujita, and Mao~\cite{LZFM26+} asked whether several pairwise disjoint edges can be removed simultaneously while preserving $k$-connectivity.
They extended Theorem~\ref{thm:Halin} from a single edge to a matching of size two by showing that every $k$-connected graph $G$ with $\delta(G)\ge k+1$ contains a $k$-removable matching of size two, unless $k=1$ and $G$ is a cycle.
They also obtained larger removable matchings for general $k$ under stronger minimum degree assumptions.
Motivated by these results, they proposed the following conjecture, which we state with the necessary cycle exception when $k=1$.

\begin{conjecture}\label{conj:matching}
    Let $k$ be a positive integer.
    Every $k$-connected graph $G$ with $\delta(G)\ge k+1$ contains a $k$-removable matching of size $\left\lceil(\delta(G)+1)/2\right\rceil$,
    unless either $k=1$ and $G$ is a cycle, or $\delta(G)$ is even and
    $G\cong K_{\delta(G)+1}$.
\end{conjecture}

Chu, Kim, and Park~\cite{CKP26+} proved that every $k$-connected graph $G$ on at least $2m$ vertices has a $k$-removable matching of size $m$ if
\[
\delta(G)\ge
\begin{cases}
\max\{k+\lceil m/2\rceil,2m\}, & \text{if } k\ge m,\\
k+m, & \text{if } k<m.
\end{cases}
\]
In particular, their result verifies
Conjecture~\ref{conj:matching} when $\delta(G)\ge2k+1$.
Clay and Jord\'{a}n~\cite{CJ26+} subsequently proved that every $k$-edge-connected graph $G$ with at least $2m$ vertices and
$\delta(G)\ge k+m$ contains a $k$-removable matching of size $m$.

Our main result determines the sharp minimum degree condition for the existence of a $k$-removable matching of any prescribed size, including all exceptions at the boundary.

\begin{theorem}\label{thm:main}
    Let $k$ and $m$ be positive integers.
    Every $k$-connected graph $G$ with
    \[
        \delta(G)\ge \max\{k+1,2m-2\}
    \]
    contains a $k$-removable matching of size $m$, unless $G\cong K_{2m-1}$, or $(k,m)=(1,2)$ and $G$ is a cycle.
\end{theorem}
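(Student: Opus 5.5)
We will argue by extremal choice. Fix $k,m$ and a $k$-connected $G$ with $\delta(G)\ge\max\{k+1,2m-2\}$, not one of the exceptions, and let $M$ be a maximum $k$-removable matching of $G$. Suppose, for contradiction, that $|M|\le m-1$.

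The first ingredient is a prescribed-set strengthening of Theorem~\ref{thm:Halin}. Let $H$ be $k$-connected and $S\subseteq V(H)$. If every vertex outside $S$ has degree at least $k+1$ in $H$ and some edge of $H-S$ exists, then some edge of $H-S$ is $k$-removable, unless a small explicit obstruction occurs. We would prove this along Halin's original lines. Take an edge $e=xy$ of $H-S$ that is not $k$-removable, with a $(k-1)$-separator $T$ of $H-e$ separating $x$ from $y$. Choose the pair $(e,T)$ so that the side $C$ containing $x$ is inclusion-minimal. The standard crossing-separator argument then shows that $C$ contains a vertex of degree $k$, or that $C\subseteq S$. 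The degree condition outside $S$ rules out the first case, and the second must be handled by a counting argument. Working out exactly which configurations escape (complete graphs, and cycles when $k=1$) is part of this step.

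We apply the lemma to $H=G-E(M)$ with $S=V(M)$. Here $|S|\le 2m-2$, and $H$ is $k$-connected by choice of $M$. Each vertex outside $S$ keeps its full degree, so it has degree at least $k+1$ in $H$. If $G-S$ contains an edge, the lemma gives a $k$-removable edge $f$ of $H$ disjoint from $V(M)$, and $M\cup\{f\}$ contradicts maximality. Hence $V(G)\setminus S$ is independent. Every vertex $v$ outside $S$ then has all its neighbours in $S$, so $2m-2\le\delta(G)\le\deg v\le|S|\le 2m-2$. Thus $\deg v=2m-2$, $|M|=m-1$, and $v$ is adjacent to all of $S$.

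This leaves a very rigid structure. $S$ spans $m-1$ matching edges, every vertex of $R=V(G)\setminus S$ is complete to $S$, and $R$ is independent. If $|R|\ge 2$, we would perform an exchange. Take $uw\in M$ and distinct $r_1,r_2\in R$, and replace $uw$ by $r_1u$ and $r_2w$. This gives a matching of size $m$, and we must check it is $k$-removable. In $G-E(M')$, every $R$-vertex is still adjacent to at least $2m-4$ vertices of $S$. Any separator of size less than $k$ would have to isolate some piece, which is impossible since $|S|\ge k+1$ and the graph is dense. Choosing which edge $uw$ to swap, possibly using the freedom in re-choosing $M$ among maximum ones, should handle the low-degree cases. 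If $|R|=1$, then $|V(G)|=2m-1$ and $G$ is complete, i.e.\ $G\cong K_{2m-1}$, which is the exception. When $m$ is small ($m\le 2$), the bound $\delta\ge k+1$ dominates and the cycle case $(k,m)=(1,2)$ appears as the lemma's obstruction.

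The main obstacle is the prescribed-set lemma. Halin's minimal-fragment argument only yields a removable edge somewhere, and forcing it into $H-S$ requires controlling fragments lying largely inside $S$. We would first try choosing $T$ to minimise $|C\setminus S|$, and use that vertices of $C\setminus S$ have degree at least $k+1$ while $|S|$ is small relative to $\delta(G)$. The exchange step in the rigid case should be routine by comparison.
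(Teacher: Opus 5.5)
\textbf{There is a genuine gap} at the step ``If $G-S$ contains an edge, the lemma gives a $k$-removable edge $f$ of $H$ disjoint from $V(M)$.'' The obstruction list you anticipate for the prescribed-set lemma (complete graphs, and cycles when $k=1$) is wrong.

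The correct statement is Theorem~\ref{thm:prescribed-set-removable-edge}. It needs degree at least $\max\{k+1,|S|\}$ outside $S$, and even then it has a further exception: $|S|=k+1$ and $G-S$ is a forest each of whose leaves is adjacent to all of $S$ except exactly one vertex. Nothing in your setting excludes this. It occurs precisely when $|M|=m-1$, $k=2m-3$ and $\delta(G)=2m-2$.

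Here is a concrete instance. Let $k=m=3$, let $G=K_6-\{ua,va'\}$ on $\{u,v,a,a',b,b'\}$, and let $M=\{aa',bb'\}$.
\begin{itemize}
\item $\overline{H}$ is the path $uaa'v$ plus the edge $bb'$, so $H=G-M$ is $3$-connected.
\item Both vertices outside $S$ have degree $4$ in $H$.
\item Yet $uv$, the only edge of $G-S$, is not $3$-removable in $H$: in $H-uv$ the set $\{b,b'\}$ separates $\{u,a'\}$ from $\{a,v\}$.
\end{itemize}
So no lemma about the pair $(H,S)$ alone can produce an edge extending $M$ here. Your conclusion that $V(G)\setminus S$ is independent therefore does not follow, and the rigid-structure case is not the only remaining case.

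Handling this exceptional configuration is the real content of the proof, and your proposal has nothing in its place. The paper proceeds as follows:
\begin{itemize}
\item It chooses $M$ among maximum $k$-removable matchings so as to minimise the number of components of $G-V(M)$.
\item It applies Theorem~\ref{thm:mader} to cycles through an edge of $M$ to swap that edge out (Claim~\ref{clm:ends-only}).
\item It shows that the nontrivial components of $G-V(M)$ are paths, and treats $k=3$ separately.
\item It deduces that $G-V(M)\cong P_2$, or $G-V(M)\cong P_3$ with $k\in\{3,5\}$.
\item Hence $|V(G)|\in\{k+3,k+4\}$ with $\delta(G)=|V(G)|-2$ or $|V(G)|-3$. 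A removable matching of size $m$ is then built from scratch by Lemmas~\ref{lem:dense-matching} and~\ref{lem:n-3}, not by extending $M$. In the example above, $M$ does not extend at all.
\end{itemize}

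Your direct exchange for the case where $V(G)\setminus S$ is independent and complete to $S$ (replacing $uw$ by $r_1u,r_2w$) can be made rigorous by a short case check on small separators. It is a reasonable alternative to the Mader-cycle swap in Claim~\ref{clm:edge_exists}, but that is the easy case.
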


Both exceptions in Theorem~\ref{thm:main} are unavoidable, since $K_{2m-1}$ has no matching of size $m$ and a cycle has no $1$-removable matching of size two.
Consequently, the condition $\delta(G)\ge \max\{k+1,2m-1\}$ is the sharp minimum degree condition guaranteeing a $k$-removable matching of size $m$ without exceptions.
Taking $m=\left\lceil(\delta(G)+1)/2\right\rceil$ in Theorem~\ref{thm:main} resolves Conjecture~\ref{conj:matching}.

\begin{corollary}\label{cor:large-removable-matching}
    Let $k$ be a positive integer. Every $k$-connected graph $G$ with $\delta(G)\ge k+1$ contains a $k$-removable matching of size $\left\lceil(\delta(G)+1)/2\right\rceil$, unless either $k=1$ and $G$ is a cycle, or $\delta(G)$ is even and $G\cong K_{\delta(G)+1}$.
\end{corollary}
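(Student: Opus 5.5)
The plan is to apply Theorem~\ref{thm:main} with the specific value $m=\lceil(\delta(G)+1)/2\rceil$, which is a positive integer since $\delta(G)\ge k+1\ge 2$. Write $\delta=\delta(G)$. The task has two parts: check that the degree hypothesis of Theorem~\ref{thm:main} holds, and show that its exceptions match those listed in Corollary~\ref{cor:large-removable-matching}.

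\emph{Degree hypothesis.} We need $\delta\ge \max\{k+1,2m-2\}$. The bound $\delta\ge k+1$ is assumed. For the other bound, split on parity.
\begin{itemize}
\item If $\delta$ is odd, then $m=(\delta+1)/2$, so $2m-2=\delta-1<\delta$.
\item If $\delta$ is even, then $m=(\delta+2)/2$, so $2m-2=\delta$.
\end{itemize}
In both cases $\delta\ge 2m-2$, so Theorem~\ref{thm:main} yields a $k$-removable matching of size $m$ unless $G\cong K_{2m-1}$, or $(k,m)=(1,2)$ and $G$ is a cycle.

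\emph{First exception: $G\cong K_{2m-1}$.}
\begin{itemize}
\item If $\delta$ is odd, then $K_{2m-1}=K_{\delta}$ has minimum degree $\delta-1\ne\delta$. This contradicts $\delta(G)=\delta$, so the exception cannot occur.
\item If $\delta$ is even, then $2m-1=\delta+1$, so the exception reads $G\cong K_{\delta+1}$ with $\delta$ even. This is exactly the second exception in the corollary.
\end{itemize}

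\emph{Second exception: $(k,m)=(1,2)$ and $G$ is a cycle.} Here $k=1$ and $G$ is a cycle, which is the first exception in the corollary. In fact, a cycle has $\delta=2$, which conflicts with $\delta\ge k+1=2$ only when $k\ge 2$. For $k=1$ it gives $m=\lceil 3/2\rceil=2$, so this case really does arise. Conversely, any instance of "$k=1$ and $G$ is a cycle" is excluded in the corollary anyway, so nothing further needs checking.

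The only point needing care is the parity bookkeeping in the first exception: for odd $\delta$ the exception is vacuous by a degree count, and for even $\delta$ it coincides with $K_{\delta+1}$. Once that is checked, the corollary follows directly from Theorem~\ref{thm:main}.
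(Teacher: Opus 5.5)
Your proposal is correct and matches the paper's argument: the paper gets the corollary by substituting $m=\lceil(\delta(G)+1)/2\rceil$ into Theorem~\ref{thm:main}, and your parity check of $\delta(G)\ge 2m-2$ and of the exception $G\cong K_{2m-1}$ fills in details the paper leaves implicit. Your aside about when a cycle satisfies $\delta\ge k+1$ is worded confusingly (it should simply say that a cycle meets the hypothesis only when $k=1$), but your argument does not depend on it.
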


The proof of Theorem~\ref{thm:main} relies on a prescribed-set strengthening of Halin's theorem. A different sufficient condition for such an edge was proved in~\cite{CKP26+}.
Theorem~\ref{thm:prescribed-set-removable-edge} characterizes the exceptional configuration at the boundary threshold $\max\{k+1,|W|\}$. In the proof of the main theorem, $W$ is the set of vertices covered by a maximum $k$-removable matching. 
We record the following exception-free consequence, which reduces to Theorem~\ref{thm:Halin} when $W$ is the empty set.

\begin{corollary}\label{cor:prescribed-set-removable-edge}
    Let $G$ be a $k$-connected graph, and let $W\subsetneq V(G)$.
    Set $U=V(G)\setminus W$ and $w=|W|$.
    Then $G[U]$ contains a $k$-removable edge if
        \[
        d_G(x)\ge \max\{k+1,w+1\}
    \]
    for every $x\in U$.
\end{corollary}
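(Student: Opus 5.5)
The natural route is to obtain the corollary as a direct consequence of Theorem~\ref{thm:prescribed-set-removable-edge}. That theorem is referred to in the introduction as characterizing the exceptional configuration at the boundary threshold $\max\{k+1,|W|\}$; its formal statement appears later in the paper. The hypothesis of the corollary, $d_G(x)\ge\max\{k+1,w+1\}$ for every $x\in U$, is at least as strong as the boundary threshold $\max\{k+1,w\}$. So Theorem~\ref{thm:prescribed-set-removable-edge} applies and yields a $k$-removable edge of $G[U]$, unless $G$, $W$, $U$ form the exceptional configuration. The whole task is then to show that the exceptional configuration cannot occur under the stronger bound.

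I do not yet know exactly what the exceptional configuration is. What I would look for in its description is a vertex $x\in U$ whose neighbourhood is essentially forced: $N_G(x)\subseteq W\cup\{y\}$, with $d_G(x)\le w$ or $d_G(x)\le k$. The motivating example is $U$ spanning a cycle-like or complete structure attached to all of $W$, as in $K_{2m-1}$ with $W$ the vertex set of a near-perfect matching. Given such a description, I would read off an explicit upper bound $d_G(x)\le\max\{k+1,w\}$ for some $x\in U$. This contradicts $d_G(x)\ge\max\{k+1,w+1\}$ whenever $w\ge k+1$. When $w\le k$, the hypothesis reduces to $\delta\ge k+1$ on $U$, and I would check separately that the exception is already excluded there.

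If the exceptional configuration turns out not to carry such a degree deficiency, I would fall back on a direct Halin-type argument.
\begin{itemize}
\item Suppose every edge $e=xy$ of $G[U]$ is non-removable. Then $G-e$ has a $(k-1)$-separator $S_e$ separating $x$ from $y$.
\item Among all such edges $e$ and all sides $A$ of $G-e-S_e$ that contain an endpoint $x\in U$, choose the pair minimizing $|A|$.
\item Since $N_G(x)\subseteq (A\setminus\{x\})\cup S_e\cup\{y\}$, we get $|A|\ge d_G(x)-k+1\ge 2$.
\item The degree bound $w+1$ gives $x$ at least one neighbour in $U$ beyond what $W$ can absorb. Combined with $|A\cup S_e|$ being large, I would aim to find a second $U$-edge $f$ with an endpoint in $A$.
\item Uncrossing the separator $S_f$ with $S_e$, via the standard submodularity of vertex cuts, should then produce a strictly smaller fragment, contradicting minimality.
\end{itemize}

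The main obstacle in this fallback is the step that locates the second edge $f\in E(G[U])$ inside $A\cup S_e$. The count above only guarantees that $x$ has at least one $U$-neighbour, and that neighbour could be $y$ itself. To get around this, I would choose $A$ as the smaller of the two sides and run the count from both endpoints $x$ and $y$ of $e$.
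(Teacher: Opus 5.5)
Your main route is the paper's own. The corollary is read off from Theorem~\ref{thm:prescribed-set-removable-edge}, and the extra $+1$ in the degree bound rules out the exceptional configuration; this is the Remark following that theorem.

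As written, however, the proposal leaves open the two facts that make up the whole proof.

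\begin{itemize}
\item Theorem~\ref{thm:prescribed-set-removable-edge} also assumes $E(G[U])\neq\emptyset$, and you never check this. It holds because $W\subsetneq V(G)$ gives a vertex $x\in U$, and $d_G(x)\ge w+1>|W|$ forces $x$ to have a neighbour in $U$. This is where the hypothesis $W\subsetneq V(G)$ is used; your proposal never uses it.
\item The exception is explicit. It requires $w=k+1$ and $G[U]$ to be a forest in which every leaf $u$ satisfies $N_G(u)=(W\setminus\{z_u\})\cup\{v_u\}$ for some $z_u\in W$. Since $G[U]$ has an edge, it has a leaf $u$, and then $d_G(u)=w<w+1$, a contradiction.
\end{itemize}

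So your anticipated degree-deficient vertex with $N_G(x)\subseteq W\cup\{y\}$ is the right picture: it is a leaf of an acyclic $G[U]$. It is not a cycle-like or complete structure, and your $K_{2m-1}$ example has $|U|=1$, so it is not an instance at all. No separate check for $w\le k$ is needed, since the exception forces $w=k+1$.

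The Halin-type fallback is not needed, and its key step (locating a second $U$-edge in the fragment) is, as you say, unresolved. If you want a self-contained proof, the tool the paper uses is Mader's theorem (Theorem~\ref{thm:mader}), not uncrossing:
\begin{itemize}
\item Mader's theorem makes $G[U]$ a forest when all its edges are $k$-essential.
\item Your degree bound forces a leaf $u$ with $U$-neighbour $v$ to have $N_G(u)=W\cup\{v\}$.
\item Take a separator $S$ of $G-uv$ with $|S|\le k-1$, and let $Y$ be $v$'s side. No vertex of $W$ lies in $Y$, since every vertex of $W\setminus S$ is adjacent to $u$.
\item The tree $G[Y]$ has a vertex of degree at most one, chosen different from $v$ when $|Y|\ge 2$. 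That vertex has degree at most $k$ in $G$, a contradiction.
\end{itemize}
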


The rest of the paper is organized as follows. Section~2 proves the
prescribed-set strengthening of Halin's theorem, Section~3 establishes
the two dense matching lemmas needed for the extremal configurations,
and Section~4 proves Theorem~\ref{thm:main}.

\section{Removable edges outside a prescribed set}

An edge $e$ of a $k$-connected graph $G$ is called \emph{$k$-essential} if $G-e$ is not $k$-connected. We use the following theorem of Mader~\cite[Satz~1]{Mader72}. An English formulation can also be found in the survey of Kriesell~\cite[Theorem~2.1]{Kriesell13}.

\begin{theorem}[Mader~\cite{Mader72}]\label{thm:mader}
Let $G$ be a $k$-connected graph, and let $C$ be a cycle all of whose edges are $k$-essential in $G$. Then $C$ contains a vertex of degree $k$ in $G$.
\end{theorem}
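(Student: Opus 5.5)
This is Mader's theorem, which the paper cites rather than proves. If I were to prove it, I would argue by contradiction with a minimal-fragment crossing argument. Suppose every vertex of $C$ has degree at least $k+1$ in $G$.

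For a $k$-essential edge $e=xy$, the graph $G-e$ has a separator $S$ with $|S|=k-1$ and $x,y\notin S$, separating $x$ from $y$. So $V(G)\setminus S$ splits as $A\cup B$ with $x\in A$, $y\in B$, and $xy$ the only edge between $A$ and $B$. Call $A$ an \emph{$e$-fragment} with \emph{tip} $x$. Note that $S\cup\{x\}$ and $S\cup\{y\}$ are $k$-separators of $G$ whenever $|A|\ge 2$, respectively $|B|\ge 2$. Among all edges $e$ of $C$ and all $e$-fragments, I choose a pair $(e,A)$ with $|A|$ minimum. Write $e=xy$ with $x\in A$.

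\emph{Step 1: $|A|\ge 2$.} If $A=\{x\}$, then $N_G(x)\subseteq S\cup\{y\}$. Hence $d_G(x)\le k$, contradicting the assumption on $C$. It follows that $S\cup\{x\}$ separates $A\setminus\{x\}$ from $B$ in $G$.

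\emph{Step 2: locate the next cycle edge.} Let $z$ be the neighbour of $x$ on $C$ other than $y$, and set $f=xz$. Since $xy$ is the only $A$–$B$ edge, $z\in A\cup S$. Since $f$ is $k$-essential, choose an $f$-fragment $A'$ with separator $S'$ and complement $B'$, oriented so that $x\in A'$ and $z\in B'$.

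\emph{Step 3: cross $(A,S,B)$ with $(A',S',B')$.} Consider the nine intersections. The key point is that $x\in A\cap A'$ and $y\in B$. I would compare $|S\cap A'|+|S\cap S'|+|S'\cap A|$ with $|S\cap B'|+|S\cap S'|+|S'\cap B|$; their sum is at most $2(k-1)$ after accounting for how $x,y,z$ are placed. So one of them is at most $k-1$.

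Suppose the first sum is at most $k-1$. Then the neighbourhood of $A\cap A'$ in $G-e$ (or in $G-f$) lies in $(S\cap A')\cup(S\cap S')\cup(S'\cap A)$. Provided the opposite corner is nonempty (it contains $y$ or $z$), this exhibits $A\cap A'$ as an $e$- or $f$-fragment. It is strictly smaller than $A$, because $z\in A\setminus A'$ or, when $z\in S$, the corner $A\cap B'$ is forced nonempty by $|A|\ge 2$. That contradicts minimality. The symmetric case uses $A\cap B'$ or $A'\cap B$ with the roles of the two edges swapped, again with tip $x$ or $z$.

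\emph{Main obstacle.} The delicate part is the bookkeeping in Step 3. Each crossing set must genuinely be a fragment of $e$ or of $f$: its only edge to the opposite side must be the relevant cycle edge, and the opposite corner must be nonempty. The counting must also handle the two cases $z\in A$ and $z\in S$ separately.

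To manage this, I would first use $k$-connectivity of $G$ to show that each corner adjacent to a nonempty opposite corner has a "separator count" of at least $k-1$. This gives the standard submodular inequality $|N(A\cap A')|+|N(B\cap B')|\le |S|+|S'|$, adjusted by the edges $e$ and $f$. Minimality of $|A|$ then forces the needed corner to be empty, and that emptiness contradicts the position of $x$, $y$ and $z$.
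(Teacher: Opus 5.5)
The paper quotes this result from Mader without proof, so your sketch has to stand on its own. Your framework is the right one: take a fragment $A$ of minimum size over all edges of $C$ and both sides, and cross it with a separation $(A',S',B')$ for the other cycle edge $f=xz$ at the tip. But Step~3 fails as written. The corner $A\cap A'$ contains $x$, the common end of $e$ and $f$, and both edges leave it: $y\in B$ and $z\in B'$, and neither lies in $(S\cap A')\cup(S\cap S')\cup(S'\cap A)$. So $z\in N_{G-e}(A\cap A')$ and $y\in N_{G-f}(A\cap A')$, and the claimed inclusion of the neighbourhood is false. With that separator, $A\cap A'$ is never a fragment of $e$ or of $f$. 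The opposite corner $B\cap B'$ contains neither $y$ (since $y\in A'\cup S'$) nor $z$ (since $z\in A\cup S$). When $z\in S$, nothing forces $A\cap B'\neq\emptyset$: the second vertex of $A$ may lie in $A\cap A'$ or $A\cap S'$, and in fact a correct argument shows this corner is empty. So the dichotomy on the two sums produces no smaller fragment, and the case $z\in S$ is not handled.

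A repair starts from the other diagonal, $A\cap B'$ and $B\cap A'$. Their separators $T_1=(A\cap S')\cup(S\cap S')\cup(S\cap B')$ and $T_2=(B\cap S')\cup(S\cap S')\cup(S\cap A')$ satisfy $|T_1|+|T_2|=2k-2$. If $A\cap B'\neq\emptyset$, then $T_1$ separates it from $x$ in $G-f$ (in $G$ itself when $z\in S$). Connectivity and the minimality of $|A|$ then give $|T_1|\ge k$. Hence $|T_2|\le k-2$, and the $(k-1)$-connectivity of $G-e$ forces $B\cap A'=\emptyset$. Since $|S\cap A'|=k-1-|S\cap S'|-|S\cap B'|\le |A\cap S'|-1$, the $f$-fragment $A'$ has $|A'|\le|A\cap A'|+|A\cap S'|-1<|A|$, contradicting minimality. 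This settles the case $z\in A$, and gives $A\cap B'=\emptyset$ when $z\in S$. In the latter case your diagonal is useful, but in a different way. If $B\cap B'\neq\emptyset$, it is separated from $x$ in $G$ by $(B\cap S')\cup(S\cap S')\cup(S\cap B')$, so $T_3=(A\cap S')\cup(S\cap S')\cup(S\cap A')$ has at most $k-2$ vertices. As $N_G(A\cap A')\subseteq T_3\cup\{y,z\}$, either $T_3\cup\{x\}$ is a separator with at most $k-1$ vertices, or $A\cap A'=\{x\}$ and $d(x)\le k$. Thus $B'\subseteq S$. Since $N_G(x)\cap B'=\{z\}$ and $|B'|\ge|A|$ by minimality, we get $d(x)\le(|A|-1)+(k-1-|B'|)+2\le k$. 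This second use of $d(x)\ge k+1$ is the missing idea; $|A|\ge 2$ alone does not close the case $z\in S$.
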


We prove a strengthening of Theorem~\ref{thm:Halin} that guarantees a $k$-removable edge with both ends outside a prescribed set of vertices.

\begin{theorem}\label{thm:prescribed-set-removable-edge}
    Let $G$ be a $k$-connected graph, and let $W\subseteq V(G)$. 
Set $U=V(G)\setminus W$ and $w=|W|$. 
Suppose that $E(G[U])\neq\emptyset$ and $d_G(x)\ge \max\{k+1,w\}$ for every $x\in U$.
Then $G[U]$ contains a $k$-removable edge, unless $w=k+1$ and
$G[U]$ is a forest such that, for every leaf $u$, there exists $z_u\in W$ satisfying
\[
    N_G(u)=(W\setminus\{z_u\})\cup\{v_u\},
\]
where $v_u$ is the unique neighbor of $u$ in $U$.
\end{theorem}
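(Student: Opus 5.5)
Assume that every edge of $G[U]$ is $k$-essential; we aim to show that the exceptional configuration must then hold. Since every vertex of $U$ has degree at least $k+1$ in $G$, no cycle of $G[U]$ can consist entirely of $k$-essential edges by Theorem~\ref{thm:mader}, because such a cycle would contain a vertex of degree $k$. Hence $G[U]$ is a forest. Since $E(G[U])\neq\emptyset$, some component of the forest is a nontrivial tree, and so $G[U]$ has a leaf. Let $u$ be any leaf and $v_u$ its unique neighbour in $U$. Then $d_G(u)\le w+1$, all neighbours of $u$ other than $v_u$ lie in $W$, and $d_G(u)\ge\max\{k+1,w\}$. The remaining task is to bound $w$ from both sides and to pin down $N_G(u)$.

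The key step uses the essentiality of the edge $uv_u$. Because $G-uv_u$ is not $k$-connected, there is a separator $S$ with $|S|=k-1$ in $G-uv_u$ that separates $u$ from $v_u$. Let $A$ be the side containing $u$ and $B$ the side containing $v_u$. Then $N_G(u)\setminus\{v_u\}\subseteq A\cup S$. I would choose the essential leaf edge, and its separator, to minimise $|A|$, or else work with an extremal leaf among all such fragments, and argue that $A\cap U$ must be small. The reason is that any edge of $G[U]$ lying inside $A\cup S$ is also essential and has its own separator, and the standard crossing-fragment argument (as in Mader's and Halin's proofs) would produce a smaller fragment. In the ideal case $A=\{u\}$, we get $d_G(u)\le |S|+1=k$, contradicting $d_G(u)\ge k+1$. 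So $A$ must contain further vertices, and these have to be controlled using $W$.

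More concretely, I expect the argument to show that $A\setminus\{u\}\subseteq W$. The vertices of $A\cap U$ other than $u$ can only be attached to $U$ through $S$, and I would iterate to a leaf of the forest restricted to $A$ to reach a contradiction. Given $A\setminus\{u\}\subseteq W$, the neighbours of $u$ other than $v_u$ lie in $(A\setminus\{u\})\cup S$. Also, $W\setminus A$ must meet $B$ or $S$, since $B$ contains $v_u$ and all of $v_u$'s other neighbours. Counting $d_G(u)\ge w$ against the bound $d_G(u)\le |A|-1+|S|+1$ should force $w=k+1$, $A\cup S\subseteq W\cup\{u\}$ up to one vertex, and $N_G(u)=(W\setminus\{z_u\})\cup\{v_u\}$ with $d_G(u)=k+1=w$. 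Sharpness of every inequality then produces the single missing vertex $z_u\in W$.

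The main obstacle is this fragment analysis. We must show that the side of $u$ contains no other $U$-vertex, or otherwise derive a contradiction, using only the degree hypothesis $d_G(x)\ge\max\{k+1,w\}$ for $x\in U$. A vertex $x\in A\cap U$ with $x\neq u$ has all its neighbours in $A\cup S$, so $|A|+|S|-1\ge w$. I would compare this with $|W|$: since $A\cup S$ must then contain almost all of $W$ while $B$ still needs $v_u$'s neighbourhood, I expect a counting contradiction or a reduction to a smaller fragment. I would first try taking $uv_u$ with the smallest possible fragment $A$ over all leaf edges, and then applying the crossing-separator lemma to the essential edges of $G[U]$ inside $A$.
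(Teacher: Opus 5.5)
Your reduction to a forest via Theorem~\ref{thm:mader} is fine and matches the paper. So is the setup with a leaf $u$, its neighbour $v_u$, and a $(k-1)$-separator $S$ of $G-uv_u$ with sides $A\ni u$ and $B\ni v_u$. The gap is the core step. You study the side $A$ containing $u$ and hope that a minimal-fragment or crossing argument gives $A\setminus\{u\}\subseteq W$. That argument is never carried out, and even if the claim is granted it does not finish the proof. The inequalities $w\le d_G(u)\le |A|+|S|$ only relate $w$ to $|A|$. Nothing in your plan gives the needed upper bound $w\le k+1$, because you never find a vertex of $U$ whose degree you can bound from above independently of $|W\cap A|$. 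A vertex $x\in A\cap U$ of degree at most one in the forest $G[A\cap U]$ satisfies only $d_G(x)\le |S|+|W\cap A|+1$. Since $|W\cap A|$ is uncontrolled, iterating to leaves of the forest inside $A$ gives no contradiction by counting.

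The missing idea is to work on the other side, $B$. From $\max\{k+1,w\}\le d_G(u)\le w+1$ you get $a:=|W\setminus N_G(u)|\le 1$. Since $v_u$ is the only neighbour of $u$ in $B$, every vertex of $W\cap B$ is a non-neighbour of $u$, so $|W\cap B|\le a$. Thus $B$ lies almost entirely in $U$, and $G[B\cap U]$ is a forest. Pick $y$ with $d_{G[B\cap U]}(y)\le 1$, taking $y\neq v_u$ whenever $|B\cap U|\ge 2$. All neighbours of $y$ lie in $S\cup(W\cap B)$ except at most one: its forest neighbour, or $u$ when $y=v_u$ and $B\cap U=\{v_u\}$. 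Hence $d_G(y)\le (k-1)+a+1=k+a$. Comparing with $d_G(y)\ge\max\{k+1,w\}$ gives $a=1$ and $w\le k+1$. Then $k+1\le d_G(u)=w+1-a=w$ forces $w=k+1$ and $N_G(u)=(W\setminus\{z_u\})\cup\{v_u\}$. This closes the argument for every leaf directly, without minimal fragments or crossing separators.
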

\begin{proof}
Suppose that no edge of $G[U]$ is $k$-removable in $G$. Thus every edge of $G[U]$ is $k$-essential in $G$.
If $G[U]$ contained a cycle, then Theorem~\ref{thm:mader} implies that the cycle contains a vertex of degree $k$ in $G$, which contradicts $d_G(x)\ge k+1$ for every $x \in U$.
Therefore $G[U]$ is a forest.

Take a leaf $u$ in $G[U]$, and let $v$ be its unique neighbor in $U$. Since $G[U]$ has an edge, $u$ exists.
Note that $N_G(u)\subseteq W\cup\{v\}$.
Thus we have
\[
    \max\{k+1,w\}\le d_G(u)\le w+1.
\]
Therefore $N_G(u)=W\cup \{v\}$ or $N_G(u)=(W\setminus\{z\})\cup\{v\}$ for some $z\in W$.

Let $a=|W\setminus N_G(u)|$. Then $a\le 1$. 
We will show that $a=1$. 
Since $uv$ is $k$-essential in $G$, $G-uv$ has a vertex cut $S$ with $|S|\le k-1$. 
Since $G-S$ is connected, neither $u$ nor $v$ belongs to $S$. Moreover, $G-uv-S$ has exactly two components $X$ and $Y$ joined in $G-S$ only by $uv$, where $u\in X$ and $v \in Y$.
Then no vertex of $W\cap Y$ is adjacent to $u$, so $|W\cap Y|\le a$.
Let $A=U\cap Y$.
Since $G[U]$ is a forest, $G[A]$ is a forest.
Then we may choose a vertex $y$ of $G[A]$ such that $d_{G[A]}(y)\le 1$.
In particular, if $|A|\ge 2$, we choose $y \neq v$.
If $y=v$, then $A=\{v\}$ and every neighbor of $v$ other than $u$ belongs to $S\cup (W\cap Y)$.
If $y\neq v$, then $y$ has no neighbor in $X$.
Consequently, in either case 
\[
d_G(y)\le |S|+|W\cap Y|+1 \le k+a.
\]
On the other hand, $y\in U$ and so $d_G(y)\ge \max\{k+1,w\}$.
Thus $a\ge 1$ and $k\ge w-1$.
Then $a=1$ since $a\le 1$.
Moreover, $k=w-1$ since $k+1\le d_G(u)=|W|+1-a=w$.
Hence 
\[
N_G(u)=(W\setminus\{z\})\cup\{v\}
\] for some $z\in W$.
\end{proof}

\begin{remark}
    The exceptional case in Theorem~\ref{thm:prescribed-set-removable-edge} occurs only when $w=k+1$ and all the leaves in a forest $G[U]$ have degree $w$.
    If a vertex in $U$ has degree $w+1$, then there is an edge outside $W$.
    Thus strengthening the degree condition to $d_G(x)\ge \max\{k+1,w+1\}$ eliminates the exceptional case and immediately yields Corollary~\ref{cor:prescribed-set-removable-edge}.
\end{remark}

\section{Auxiliary results on removable matchings}

We first recall the following result of Li, Zhou, Fujita, and Mao, which settles the case $k=1$.

\begin{lemma}[Li, Zhou, Fujita, and Mao~\cite{LZFM26+}]\label{lem:1-removable 2-matching}
    Every connected graph $G$ with $\delta(G) \ge 2$ has a $1$-removable matching of size $2$, unless $G$ is a cycle.
\end{lemma}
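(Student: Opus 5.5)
The plan is to reduce the problem to finding two disjoint edges $e,f$ such that $G-\{e,f\}$ is connected. We count edges, pick $e$ to be a non-bridge, and then look for a suitable $f$ in the cycle structure of $G-e$.

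\emph{Edge count.} Since $G$ is connected with $\delta(G)\ge 2$ and is not a cycle, some vertex has degree at least $3$. Hence $2|E(G)|\ge 2|V(G)|+1$, so $|E(G)|\ge |V(G)|+1$. Thus for any spanning tree $T$ of $G$ there are at least two cotree edges, and deleting any set of cotree edges leaves $G$ connected.

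\emph{Easy case.} If two cotree edges of some spanning tree are disjoint, they already form a $1$-removable matching of size $2$.

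\emph{Reformulation via $e$ and cycles of $G-e$.} Fix an edge $e=uv$ lying on a cycle of $G$, so $G-e$ is connected. Because $|E(G-e)|\ge |V(G)|$, the graph $G-e$ still contains a cycle $C$. Any edge $f\in E(C)$ with $f\cap\{u,v\}=\emptyset$ is not a bridge of $G-e$. Then $G-e-f$ is connected, and $\{e,f\}$ is the desired matching. So it suffices to find an edge $e$ on a cycle and a cycle $C$ of $G-e$ having an edge disjoint from $e$.

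\emph{Degenerate case (main obstacle).} Suppose that for every choice of $e$ on a cycle, every cycle of $G-e$ has all its edges incident to $u$ or $v$. A cycle all of whose edges meet $\{u,v\}$ has length at most $4$ and passes through $u$ or $v$ (or both). I would exploit the freedom in choosing $e$ as follows.
\begin{enumerate}
\item Take a cycle $C$ of $G-e$; it passes through, say, $u$, and has length $3$ or $4$.
\item Swap roles: take $e'$ to be an edge of $C$ not incident to $u$ when one exists. When $C$ is a triangle $uxy$, take $e'=xy$.
\item Check that $e'$ lies on a cycle, so $G-e'$ is connected.
\item Check that $G-e'$ contains a cycle, for example one through $e$ or through another edge at $u$, that has an edge disjoint from $e'$.
\end{enumerate}
This gives a short case analysis on the configurations of cycles of length at most $4$ around $u,v$, using the fact that $G$ has at least $|V(G)|+1$ edges.

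If every such swap still fails, the constraints force every cycle of $G$ to pass through a common vertex. Moreover, all non-bridge edges must meet a fixed small set. Combined with $\delta(G)\ge 2$, I expect this to force $G$ to be a cycle, which contradicts the hypothesis. I expect this final bookkeeping, ruling out small configurations such as $K_4^-$-like pieces or a vertex shared by several short cycles, to be the main work. In each such configuration I would exhibit two disjoint non-bridge edges directly.
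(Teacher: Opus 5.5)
The paper does not prove this lemma; it imports it from Li, Zhou, Fujita, and Mao. So your proposal has to be judged on its own. Your reduction is correct. The count $|E(G)|\ge |V(G)|+1$ holds, you can choose a non-bridge $e=uv$, $G-e$ then contains a cycle $C$, and any $f\in E(C)$ disjoint from $\{u,v\}$ makes $\{e,f\}$ a $1$-removable matching.

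\textbf{The gap.} The degenerate case, which you yourself call the main work, is left as an expectation. Steps 1--4 are never carried out, and the predicted outcome (``this forces $G$ to be a cycle'') is not what actually happens. Your closing plan to ``exhibit two disjoint non-bridge edges'' would also not suffice even if carried out. Two disjoint non-bridge edges can form a $2$-edge-cut; any two disjoint edges of a cycle do. What you need is that $G-e-f$ is connected, i.e.\ that $f$ is a non-bridge of $G-e$, as in your own reformulation. Finally, your step 1 misdescribes the degenerate cycles: they cannot be triangles, and they must pass through both $u$ and $v$. So the triangle subcase of step 2 is vacuous.

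\textbf{The missing idea.} It is short. In the degenerate case every edge of $C$ meets $\{u,v\}$, so $V(C)\cap\{u,v\}$ is a vertex cover of $C$ with at most two vertices, and $C$ has length $3$ or $4$.
\begin{itemize}
\item A triangle would need both $u$ and $v$ on $C$, forcing $uv=e\in E(C)$, which is impossible in $G-e$.
\item A $4$-cycle covered by two vertices must have them antipodal. Hence $C=uxvyu$, and $G[\{u,v,x,y\}]$ contains $K_4^{-}$.
\end{itemize}
Now abandon $e$ and take the matching $\{ux,vy\}$ instead.
\begin{itemize}
\item The path $xvuy$ survives in $G-ux-vy$, so $\{u,v,x,y\}$ stays connected.
\item Any other vertex $w$ has a shortest path in $G$ to this set. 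Every edge of that path has an endpoint outside the set, so the path avoids $ux$ and $vy$.
\end{itemize}
Thus $G-\{ux,vy\}$ is connected. This closes the proof without any swapping of $e$ or further case analysis.
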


We also use the Tutte--Berge formula for the size of a maximum matching in a graph.

\begin{theorem}[Tutte--Berge formula~\cite{Berge}]
Let $G$ be a graph of order $n$.
Then the maximum size of a matching in $G$ is equal to 
\[
\frac{1}{2}\left(n-\max_{S\subseteq V(G)}(o(G-S)-|S|)\right), 
\]
where $o(G-S)$ is the number of odd components in $G-S$.
\end{theorem}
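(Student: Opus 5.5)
We prove the two inequalities separately. The hard direction reduces to Tutte's perfect matching theorem, and we also sketch a proof of that theorem so the argument is self-contained. For $S\subseteq V(G)$, write $\mathrm{def}(S)=o(G-S)-|S|$, and let $d=\max_{S}\mathrm{def}(S)$. Taking $S=\emptyset$ shows $d\ge 0$. Moreover $\mathrm{def}(S)\equiv n \pmod 2$ for every $S$: we have $n-|S|\equiv o(G-S)\pmod 2$, since even components contribute nothing to the parity, and $-|S|\equiv |S| \pmod 2$. In particular $n-d$ is even, so $(n-d)/2$ is an integer.

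\emph{Upper bound.} Let $M$ be any matching and $S\subseteq V(G)$. Each odd component $C$ of $G-S$ has a vertex not matched inside $C$. Such a vertex is either unmatched or matched to a vertex of $S$, because $M$ has no edges between distinct components of $G-S$. At most $|S|$ odd components can use an edge into $S$. Hence at least $o(G-S)-|S|$ vertices are uncovered by $M$, and therefore $|M|\le \frac12(n-\mathrm{def}(S))$. Taking $S$ to maximize $\mathrm{def}(S)$ gives $|M|\le\frac12(n-d)$.

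\emph{Lower bound via Tutte's theorem.} Let $G'$ be the join of $G$ with a complete graph $K$ on $d$ new vertices. Then $|V(G')|=n+d$, which is even. We check Tutte's condition $o(G'-T)\le|T|$ for every $T\subseteq V(G')$.
\begin{itemize}
\item If $K\not\subseteq T$, then $G'-T$ contains a vertex of $K$, and that vertex is adjacent to every other remaining vertex. So $G'-T$ is connected or empty, and $o(G'-T)\le 1$. If $T=\emptyset$, then $G'-T=G'$ has even order, so $o(G')=0$. Otherwise $|T|\ge1$.
\item If $K\subseteq T$, write $T=K\cup S$ with $S\subseteq V(G)$. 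Then $o(G'-T)=o(G-S)\le |S|+d=|T|$ by the choice of $d$.
\end{itemize}
So $G'$ has a perfect matching. Deleting the at most $d$ edges that meet $K$ leaves a matching of $G$ covering at least $n-d$ vertices, which has size at least $(n-d)/2$.

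\emph{Tutte's theorem (the main obstacle).} We need: if $|V(H)|$ is even and $o(H-T)\le|T|$ for all $T$, then $H$ has a perfect matching. We follow Lovász's edge-maximal argument. Suppose $H$ is a counterexample. Adding an edge preserves Tutte's condition, so we may take $H$ edge-maximal without a perfect matching. Let $S$ be the set of vertices adjacent to all other vertices.
\begin{enumerate}
\item \emph{Every component of $H-S$ is complete.} Otherwise there are $a,b,c$ in one component with $ab,bc\in E(H)$ and $ac\notin E(H)$. Since $b\notin S$, there is also a vertex $x$ with $bx\notin E(H)$. By maximality, $H+ac$ has a perfect matching $M_1$ containing $ac$, and $H+bx$ has a perfect matching $M_2$ containing $bx$. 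In $M_1\triangle M_2$, consider the alternating cycle $C$ through $bx$.
\begin{itemize}
\item If $C$ avoids $ac$, then $M_2\triangle C$ is a perfect matching of $H$.
\item Otherwise, use the edge $ab$ or $bc$ to shortcut $C$. Walk along $C$ from $b$, away from $x$, until first reaching $a$ or $c$. Together with the chosen edge at $b$, this segment forms an even alternating cycle that avoids both $ac$ and $bx$, and switching $M_1$ along it gives a perfect matching of $H$.
\end{itemize}
This shortcut step is the delicate point: the cycle must be traversed in the direction that makes the parities match. Either way we contradict that $H$ has no perfect matching.
\item \emph{Building the matching.} Now every component of $H-S$ is complete. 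Match each even component internally. In each odd component, match all but one vertex internally, and match the remaining vertex to a distinct vertex of $S$; this is possible because $o(H-S)\le|S|$ and every vertex of $S$ is adjacent to everything. The vertices of $S$ still unused number $|S|-o(H-S)$. This is even because $|V(H)|$ is even and the rest of $V(H)$ has been covered in pairs. These leftover vertices of $S$ induce a clique, so they can be matched among themselves.
\end{enumerate}
The result is a perfect matching of $H$, a contradiction. This proves Tutte's theorem and hence the Tutte--Berge formula.
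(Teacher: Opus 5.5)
The paper gives no proof of this statement. It quotes the Tutte--Berge formula from Berge and uses it only as a black box in the proof of Lemma~\ref{lem:n-3}. Your parity remark, the counting upper bound, and the reduction of the lower bound to Tutte's theorem via the join with $K_d$ are all correct. This is the standard derivation, and citing Tutte's theorem at that point would already give a complete proof.

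\textbf{The gap.} The error is in your self-contained proof of Tutte's theorem, at exactly the shortcut step you flagged.
\begin{itemize}
\item Walking along $C$ from $b$ away from $x$ means starting with the $M_1$-edge at $b$. Its other end is not in $\{a,c\}$, since $ac\in M_1$.
\item The first vertex $v\in\{a,c\}$ you meet is entered through its $M_2$-edge. Entering through $ac$ would mean the other end of $ac$ had been visited earlier.
\item So the segment from $b$ to $v$ begins with an $M_1$-edge, ends with an $M_2$-edge, and has even length. Adding the edge $vb$ closes an \emph{odd} cycle, which cannot be alternating.
\item Independently of parity, switching $M_1$ along a cycle that avoids $ac$ leaves the non-edge $ac$ in the matching. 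A cycle avoiding both $ac$ and $bx$ is therefore useless: it must contain the non-edge of whichever matching you switch.
\end{itemize}
For a concrete instance, take $C=b\,y\,a\,c\,r\,x\,b$ with $M_1=\{by,ac,rx\}$ and $M_2=\{ya,cr,xb\}$. Your recipe produces the triangle $b\,y\,a$.

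\textbf{The repair.} Walk from $b$ along $bx$ instead, i.e.\ toward $x$, and stop at the first $v\in\{a,c\}$.
\begin{itemize}
\item Again $v$ is entered by its $M_2$-edge. So this segment starts and ends with $M_2$-edges and has odd length.
\item Together with $vb\in E(H)$, it forms an even $M_2$-alternating cycle $C'$ that contains $bx$ but not $ac$.
\item Then $M_2\triangle C'$ is a perfect matching of $H$. In the example, this gives the cycle $b\,x\,r\,c\,b$ and the matching $\{ya,xr,cb\}$.
\end{itemize}
Alternatively, keep your direction but continue through $ac$ to its other end $v'$ and close with $v'b$. This even $M_1$-alternating cycle contains $ac$ but not $bx$, and $M_1\triangle C'$ works.

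With either correction, the rest of your argument goes through: the components of $H-S$ are complete, and the final matching is assembled as you describe.
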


We next establish two lemmas in dense graphs, corresponding to the two final cases in the proof of Theorem~\ref{thm:main}.

\begin{lemma}\label{lem:dense-matching}
Let $G$ be a graph of order $n\ge 6$. If $\delta(G)\ge n-2$, then $G$ has a matching $P$ of size $\lfloor n/2 \rfloor$ such that $G-P$ is $(n-3)$-connected.
\end{lemma}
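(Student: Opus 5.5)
The approach is to pass to complements. Since $\delta(G)\ge n-2$, the complement $\overline{G}$ has maximum degree at most $1$, so $M:=E(\overline{G})$ is a matching on $V(G)$. For any matching $P\subseteq E(G)$, the complement of $G-P$ is the graph $H=M\cup P$. It is a union of two edge-disjoint matchings, so $\Delta(H)\le 2$ and every cycle of $H$ alternates between $M$ and $P$. The plan is to turn $(n-3)$-connectivity of $G-P$ into a simple forbidden-subgraph condition on $H$, and then build $P$ explicitly.

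\emph{Step 1 (reduction).} I claim $G-P=\overline{H}$ is $(n-3)$-connected whenever $H$ has no $4$-cycle.
\begin{itemize}
\item The graph $\overline{H}$ has $n\ge n-2$ vertices, so it suffices to show that $\overline{H}-S$ is connected for every $S$ with $|S|\le n-4$.
\item Let $T=V\setminus S$, so $|T|\ge 4$, and suppose $\overline{H}[T]$ is disconnected with parts $A,B$. Then every $A$--$B$ pair is an edge of $H$.
\item If $|A|=1$, its vertex has $H$-degree at least $|T|-1\ge 3$, which is impossible.
\item Otherwise $|A|,|B|\ge 2$. Each vertex of $A$ has at least $|B|$ neighbours in $H$, which forces $|A|=|B|=2$, and then $H[T]$ contains a $4$-cycle.
\end{itemize}
So it suffices to find a matching $P\subseteq E(G)$ of size $\lfloor n/2\rfloor$ such that $M\cup P$ contains no $C_4$. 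Since cycles in $M\cup P$ alternate, such a $C_4$ would consist of two $M$-edges joined by two $P$-edges.

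\emph{Step 2 (construction).}
\begin{itemize}
\item Enumerate $V(G)$ as $v_1,\dots,v_n$ so that the $M$-edges are exactly $v_1v_2, v_3v_4,\dots,v_{2t-1}v_{2t}$, followed by the $M$-unsaturated vertices.
\item Let $P=\{v_2v_3, v_4v_5,\dots\}$, i.e.\ all edges $v_{2i}v_{2i+1}$ with $2i+1\le n$. Each such pair lies in different $M$-edges (or involves an unsaturated vertex), so it is not in $M$ and hence is an edge of $G$.
\item Then $M\cup P$ is contained in the path $v_1v_2\cdots v_n$.
\item If $n$ is odd, $|P|=(n-1)/2=\lfloor n/2\rfloor$ and $H$ is a path, so it has no cycles at all.
\item If $n$ is even, $|P|=n/2-1$. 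Add $v_nv_1$, which is an edge of $G$: $v_1$'s only possible $M$-partner is $v_2$, and $v_2\ne v_n$ since $n\ge 6$. The enlarged $P$ is still a matching, because $v_1$ and $v_n$ are not covered by the earlier $P$-edges. Now $H$ is contained in a Hamiltonian cycle of length $n\ge 6$ and again has no $4$-cycle.
\end{itemize}

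\emph{Main obstacle.} The only delicate point is Step 1, the characterisation of when the complement of a graph of maximum degree at most $2$ fails to be $(n-3)$-connected; the case analysis above handles it. The hypothesis $n\ge 6$ is used exactly to ensure that the closing Hamiltonian cycle is not itself a $C_4$ (and that $v_nv_1\notin M$). Step 2 is then routine.
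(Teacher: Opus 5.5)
Your proof is correct and follows essentially the same route as the paper. Both arguments note that $E(\overline{G})$ is a matching, close it up with $P$ into a single long cycle, and observe that a small vertex cut of $G-P$ would force a $K_{2,2}$ in that cycle. The paper extends $E(\overline{G})$ to a matching $Q$ of size $\lfloor n/2\rfloor$ and takes $P=\{y_ix_{i+1}\}$, which is exactly your enumeration. The only cosmetic difference is that for odd $n$ you use a Hamiltonian path, whereas the paper uses a cycle of length $n-1$.
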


\begin{proof}
Suppose $\delta(G)\ge n-2$.
Then the complement $\overline{G}$ satisfies $\Delta(\overline{G}) \le 1$, so $E(\overline{G})$ is a matching.
Therefore we may extend it to a matching $Q=\{x_i y_i:1\le i\le \lfloor n/2 \rfloor\}$ of size $\lfloor n/2 \rfloor$ in the complete graph on $V(G)$.
Let
\[
P=\{y_i x_{i+1}:1\le i\le \lfloor n/2 \rfloor\},
\] 
where the subscripts are taken modulo $\lfloor n/2 \rfloor$.
Then $P$ is a matching in $G$.
Moreover, $Q\cup P$ is a cycle of length $2\lfloor n/2 \rfloor$.

We claim that $P$ is a desired matching. 
Let $H=G-P$. 
Suppose to the contrary that $H$ is not $(n-3)$-connected.
Then $H$ has a vertex cut $S$ with $|S|\le n-4$.
Take a component $X$ of $H-S$ and let $Y=V(H)\setminus(S\cup X)$.
For any $x\in X$ and $y\in Y$, the edge $xy$ belongs to $\overline{H}$, so 
\[
xy\in E(\overline{H})= E(\overline{G})\cup P\subseteq Q\cup P.
\] 
Thus $Q\cup P$ contains the complete bipartite graph with parts $X$ and $Y$.
Since $Q\cup P$ is a cycle, $|X|\le2$ and $|Y|\le2$.
On the other hand, $|X|+|Y|=n-|S|\ge4$. 
Hence, $|X|=|Y|=2$ and it follows that $Q \cup P$ has a $4$-cycle as a subgraph.
This is impossible since $Q \cup P$ is a cycle of length $2\lfloor n/2 \rfloor\ge 6$.
Therefore $H$ is $(n-3)$-connected.
\end{proof}

\begin{observation}\label{obsv:K_2,3}
    Let $G$ be a graph of order $n\ge 5$.
    Suppose $\delta(G)\ge n-4$.
    Then $G$ is $(n-4)$-connected if and only if $\overline{G}$ is $K_{2,3}$-free.
\end{observation}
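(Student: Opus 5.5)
We argue in the complement, following the proof of Lemma~\ref{lem:dense-matching}. Since $\delta(G)\ge n-4$, every vertex has degree at most $3$ in $\overline{G}$. Note that $n-4\ge 1$, so the statement concerns genuine connectivity. By definition, $G$ fails to be $(n-4)$-connected if and only if either $n\le n-4$ (impossible) or $G$ has a vertex cut $S$ with $|S|\le n-5$. We show that such a cut exists if and only if $\overline{G}$ contains a $K_{2,3}$ as a subgraph, not necessarily induced.

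For the forward direction, let $S$ be a vertex cut with $|S|\le n-5$. Take a component $X$ of $G-S$ and set $Y=V(G)\setminus(S\cup X)$. Then $|X|+|Y|\ge 5$, and every pair $x\in X$, $y\in Y$ is an edge of $\overline{G}$. Hence $\overline{G}$ contains the complete bipartite graph with parts $X$ and $Y$, so each vertex of $X$ has $\overline{G}$-degree at least $|Y|$, and each vertex of $Y$ has $\overline{G}$-degree at least $|X|$. Since $\Delta(\overline{G})\le 3$, we get $|X|,|Y|\le 3$. Together with $|X|+|Y|\ge5$, this gives $\{|X|,|Y|\}=\{2,3\}$ or $|X|=|Y|=3$. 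In either case $\overline{G}$ contains $K_{2,3}$; if $|X|=|Y|=3$, simply delete one vertex from a side.

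For the converse, suppose $\overline{G}$ contains a $K_{2,3}$ with parts $A$ and $B$, where $|A|=2$ and $|B|=3$. Set $S=V(G)\setminus(A\cup B)$, so $|S|=n-5$. In $G-S$ there is no edge between $A$ and $B$, and both sets are nonempty. Therefore $G-S$ is disconnected and $S$ is a vertex cut of size $n-5<n-4$, so $G$ is not $(n-4)$-connected.

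The only point needing care is the degree bound in the forward direction, since it is what forces the two sides to have sizes exactly $2$ and $3$, or $3$ and $3$. Otherwise the argument is a direct translation, in the spirit of Lemma~\ref{lem:dense-matching}. The hypothesis $n\ge5$ guarantees $n-5\ge0$, so the set $S$ in the converse is well defined.
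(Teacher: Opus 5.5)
Your proof is correct and follows the paper's argument: in one direction a copy of $K_{2,3}$ in $\overline{G}$ makes the remaining $n-5$ vertices a vertex cut, and in the other a cut $S$ with $|S|\le n-5$ gives a complete bipartite subgraph of $\overline{G}$ whose side sizes are forced by the degree hypothesis. Your bound $|X|,|Y|\le 3$ from $\Delta(\overline{G})\le 3$ is the paper's bound $|A|,|B|\ge n-3-|S|$ rewritten using $|A|+|B|=n-|S|$, so the two case analyses coincide.
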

\begin{proof}
    If $\overline{G}$ has a copy of $K_{2,3}$, then the other $n-5$ vertices form a vertex cut, so $G$ is not $(n-4)$-connected.
    For the converse, suppose that $G$ has a vertex cut $S$ with $|S|\le n-5$.
    Let $A$ and $B$ be a partition of $V(G)\setminus S$ such that $G-S$ has no edge joining $A$ and $B$.
    Then $\overline{G}$ has a complete bipartite graph with parts $A$ and $B$.
    Since $\delta(G)\ge n-4$, both $A$ and $B$ have size at least $n-3-|S|$.
    If $|S|\le n-6$, then both $A$ and $B$ have size at least $3$.
    If $|S|=n-5$, then one of $A$ and $B$ has size $2$ and the other has $3$.
    Thus, in either case, $\overline{G}$ has a copy of $K_{2,3}$.
\end{proof}

\begin{lemma}\label{lem:n-3}
    Let $G$ be a graph of order $n\ge 7$.
    If $\delta(G)\ge n-3$, then $G$ has a matching $P$ of size $\lfloor n/2 \rfloor$ such that $G-P$ is $(n-4)$-connected.
\end{lemma}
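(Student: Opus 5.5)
By Observation~\ref{obsv:K_2,3} (the hypothesis $n\ge 7\ge 5$ holds), it suffices to find a perfect or near-perfect matching $P$ of $G$ with two properties: $\delta(G-P)\ge n-4$, which is automatic since $\delta(G)\ge n-3$ and $P$ is a matching, and $\overline{G-P}=\overline{G}\cup P$ is $K_{2,3}$-free. Since $\Delta(\overline{G})\le 2$, the complement $\overline{G}$ is a disjoint union of paths and cycles. I will therefore work in the graph $R=\overline{G}\cup P$, which has maximum degree at most $3$, and aim for a $P$ that creates no $K_{2,3}$.

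First I locate where a $K_{2,3}$ could sit in $R$. Its two degree-$3$ vertices $a,b$ have three common neighbours. In $\overline{G}$ any vertex has at most two neighbours, so at least one of the three edges at $a$ lies in $P$, and likewise for $b$. Since $P$ is a matching, each of $a,b$ is incident with exactly one $P$-edge and exactly two $\overline{G}$-edges. Hence $a$ and $b$ have degree $2$ in $\overline{G}$, share at least one common $\overline{G}$-neighbour, and the $P$-edges at $a$ and $b$ go to common neighbours. Concretely, this forces short cycles in $\overline{G}\cup P$ through $a$ and $b$: $a$ and $b$ lie at distance $2$ on a path or cycle component of $\overline{G}$, and their $P$-partners are $\overline{G}$-neighbours of the other vertex. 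So the task reduces to choosing $P$ so that $P$ never joins two vertices at distance at most $2$ in $\overline{G}$, or at least never in this specific pattern. Requiring $P$-edges to join vertices at $\overline{G}$-distance at least $3$ rules every such configuration out, because a $K_{2,3}$ would need a $P$-edge from $a$ to a vertex $\overline{G}$-adjacent to $b$, and that vertex is at $\overline{G}$-distance at most $2$ from $a$ through the common neighbour.

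Second, I construct $P$ as a near-perfect matching in the graph $D$ on $V(G)$ whose edges join vertices at $\overline{G}$-distance at least $3$; such a $P$ is automatically a matching of $G$. Each vertex is within distance $2$ of at most $4$ others, so $\delta(D)\ge n-5$. For $n\ge 10$, Dirac's condition gives a Hamiltonian cycle in $D$ and hence a matching of size $\lfloor n/2\rfloor$. Alternatively, I can get this from the Tutte--Berge formula: a set $S$ with $o(D-S)-|S|\ge 2$ would need many small odd components, which is impossible when degrees are this high. A more explicit option mimics Lemma~\ref{lem:dense-matching}: list the vertices along the components of $\overline{G}$ in order $v_1,\dots,v_n$ and match $v_i$ with $v_{i+\lfloor n/2\rfloor}$, checking distances.

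The main obstacle is the small cases $n=7,8,9$, where $\delta(D)\ge n-5$ is too weak for Dirac's condition and the distance-$3$ requirement may be impossible. An example is $\overline{G}$ containing a $5$-cycle component, in which every pair of vertices is within distance $2$. There I would relax the requirement and only forbid the exact $K_{2,3}$ pattern. I would then handle the remaining configurations of $\overline{G}$ (unions of paths and cycles on at most $9$ vertices) by a direct case analysis, for example by matching each vertex of a short cycle to a vertex in a different component whenever one exists, and treating $C_7$, $C_8$ and $C_9$ with an explicit chord-type matching.
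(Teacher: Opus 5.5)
Your reduction has an off-by-one error, and the criterion you build on does not exclude $K_{2,3}$. Take parts $\{a,b\}$, $\{x,y,z\}$ with $ax,by\in P$. The remaining edges $ay,az,bx,bz$ lie in $\overline{G}$, so $azbx$ and $bzay$ are paths of length three in $\overline{G}$. The $P$-partner $x$ of $a$ is a $\overline{G}$-neighbour of $b$, and $b$ is already at distance $2$ from $a$. So $x$ can be at $\overline{G}$-distance exactly $3$ from $a$, not at most $2$.

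For instance, let $\overline{G}$ be the path $yazbx$ plus $n-5\ge 5$ isolated vertices. Then $d_{\overline{G}}(a,x)=d_{\overline{G}}(b,y)=3$, so $ax,by\in E(D)$, and a Hamiltonian cycle of $D$ can use both as alternate edges. The resulting $P$ has $K_{2,3}\subseteq\overline{G-P}$, so the other $n-5$ vertices separate $G-P$. Hence even your $n\ge 10$ case is not proved.

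The pairs to avoid are exactly the two ends of a path of length three in $\overline{G}$. The paper's auxiliary graph $Q$ keeps an edge $uv\in E(G)$ only if $u,v$ are not such ends. Since $\Delta(\overline{G})\le 2$, each vertex is the end of at most two such paths, so $\delta(Q)\ge n-5$, the same bound you had for $D$. Every $K_{2,3}$ needs \emph{both} of its $P$-edges to be such pairs, so any matching $P$ of $G$ with $|P\setminus E(Q)|\le 1$ already works.

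Second, the cases $n=7,8,9$ are only announced, not carried out. The paper brings $n=9$ into Dirac's range by adding a universal vertex to $Q$; the new graph has minimum degree at least $n-4\ge (n+1)/2$. Its Hamiltonian cycle gives a Hamiltonian path of $Q$, hence a matching of size $\lfloor n/2\rfloor$ inside $Q$.

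For $n\in\{7,8\}$ the paper applies Tutte--Berge to $Q$. With $d=n-5$, a barrier must have $|S|=d$ and $Q-S$ equal to five isolated vertices. This forces $\overline{G}-S$ to be a $5$-cycle $v_1v_2v_3v_4v_5$. That is the obstruction you noticed, and it is genuine: then no matching of size $\lfloor n/2\rfloor$ lies inside $Q$.

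It is overcome by the one-edge slack. Writing $S=\{x_1,\dots,x_d\}$, the set $\{v_1v_3,v_2x_1,v_4x_2\}$, plus $v_5x_3$ when $n=8$, is a matching of size $\lfloor n/2\rfloor$ with only $v_1v_3$ outside $Q$. Your plan to ``forbid only the exact $K_{2,3}$ pattern'' points this way. It first needs the pattern identified correctly: a $P$-edge joining the ends of a $\overline{G}$-path of length three, occurring twice.
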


\begin{proof}
Suppose $\delta(G)\ge n-3$. We define an auxiliary graph $Q$ on $V(G)$ by joining two vertices $u$ and $v$ if $uv\in E(G)$ and $u,v$ are not the end vertices of a path of length three in $\overline{G}$. 
Since $\delta(G)\ge n-3$, $\Delta(\overline{G})\le 2$, so each vertex is an end vertex of a path of length three with at most two other vertices.
Hence $\delta(Q)\ge \delta(G)-2=n-5$.

We claim that $G-P$ is $(n-4)$-connected for every matching $P$ of $G$ such that 
\[
|P\setminus E(Q)|\le 1.
\]
Suppose to the contrary that there exists a matching $P$ of $G$ satisfying $|P\setminus E(Q)|\le 1$ such that $G-P$ is not $(n-4)$-connected.
Since $\delta(G-P)\ge \delta(G)-1\ge n-4$, Observation~\ref{obsv:K_2,3} implies that $\overline{G-P}$ has a copy of $K_{2,3}$.
Let $\{a,b\}$ and $\{x,y,z\}$ be the bipartition of a copy of $K_{2,3}$ in $\overline{G-P}$.
Since $\Delta(\overline{G})\le 2$ and $P$ is a matching, we may assume that $ax,by\in P$. 
Since $|P\setminus E(Q)|\le1$, at least one of $ax$ and $by$ belongs to $E(Q)$.
By symmetry, assume $ax\in E(Q)$.
Then $\{az,zb,bx\}\in E(\overline{G})$, so $azbx$ is a path of length $3$ in $\overline{G}$, which contradicts $ax\in E(Q)$.

Therefore, it suffices to find a matching $P$ in $G$ of size
$\lfloor n/2\rfloor$ such that $|P\setminus E(Q)|\le1$.
Suppose $n\ge9$. Then $\delta(Q)\ge n-5\ge\lfloor n/2\rfloor$.
We define a graph $Q'$ obtained from $Q$ by adding a new vertex $v$ that is adjacent to every vertex of $Q$.
Then $\delta(Q') \ge (n+1)/2$, so $Q'$ has a Hamiltonian cycle $C$ by Dirac's theorem. 
Thus $C-v$ is a Hamiltonian path in $Q$, whose alternate edges form a matching $P$ of size $\lfloor n/2\rfloor$.
Since $P\subseteq E(Q)$, we have $|P\setminus E(Q)|=0$.

It remains to consider $n\in\{7,8\}$. Set $d=n-5$.
Then $d\in\{2,3\}$, $\lfloor n/2\rfloor=d+1$, and $\delta(Q)\ge d$.
If $Q$ has a matching of size $d+1$, then we are done.
Hence, we may assume that $Q$ has no matching of size $d+1$. 
By the Tutte--Berge formula, there exists $S\subseteq V(Q)$ such that
\[
o(Q-S)\ge n+|S|-2d= |S|-d+5.
\]
If $Q$ is disconnected, then the minimum degree condition forces every component to have at least $d+1$ vertices.
For $d=2$, the only possible component orders are $3$ and $4$, and each component has a matching of size $1$ and $2$, respectively. 
For $d=3$, the two components have order $4$ and hence they are complete.
In either case, $Q$ has a matching of size $d+1$, a contradiction. 
Thus $Q$ is connected. Since $o(Q)\le1<5-d$, $S\ne\varnothing$.
If $|S|>d$, then $o(Q-S)\ge 6>|V(Q)\setminus S|$, which is impossible. 
If $|S|<d$, then $(d,|S|)\in \{(2,1),(3,1),(3,2)\}$, so, since every odd component of $Q-S$ has at least $d-|S|+1$ vertices,
\[
n-|S| \ge o(Q-S)(d-|S|+1) \ge (|S|-d+5)(d-|S|+1) >n-|S|,
\]
a contradiction.
Therefore $|S|=d$.
Then $|V(Q)\setminus S|=o(Q-S)=5$, and so $Q-S$ consists of five isolated vertices.
Since $\delta(Q)\ge d$, every vertex in $V(Q)\setminus S$ is adjacent to every vertex of $S$ and has degree $d$ in $Q$.
Each vertex of $G-S$ has precisely $d$ neighbors in $Q$, all in $S$.
By the definition of $Q$ and the fact that $\delta(G)\ge d+2$, every
vertex of $G-S$ has exactly two neighbors in $G-S$. 
Therefore, both $G-S$ and $\overline{G}-S$ are $5$-cycles.
Let $v_1v_2v_3v_4v_5v_1$ be the cycle of $\overline{G}-S$, and let $S=\{x_1,\ldots,x_d\}$.
Define
\[ 
P=
\begin{cases}
\{v_1v_3,v_2x_1,v_4x_2\} & \text{if } d=2,\\
\{v_1v_3,v_2x_1,v_4x_2,v_5x_3\} & \text{if }d=3.
\end{cases}
\]
Then $P$ is a matching in $G$ of size $\lfloor n/2\rfloor$.
Moreover, all edges of $P$ except $v_1v_3$ belong to $Q$.
Hence $|P\setminus E(Q)|=1$.
\end{proof}

\section{Proof of Theorem~\ref{thm:main}}

We are now ready to prove Theorem~\ref{thm:main}.

\begin{proof}[Proof of Theorem~\ref{thm:main}]
Let $G$ be a $k$-connected graph with $\delta(G)\ge \max\{k+1,2m-2\}$.
Suppose to the contrary that $G$ is a counterexample to Theorem~\ref{thm:main}.
Then $G$ has no $k$-removable matching of size $m$. 
Moreover, $G\not\cong K_{2m-1}$, and if $(k,m)=(1,2)$, $G$ is not a cycle.
We have $|V(G)|\ge 2m$ since $G \not\cong K_{2m-1}$ and $\delta(G) \ge 2m-2$. 
Let $M$ be a maximum $k$-removable matching in $G$. 
Among all such matchings, choose $M$ so that the number of components of $G[U]$ is minimum, where 
\[
W=V(M) \qquad \text{and} \qquad U=V(G)\setminus W.
\] 
Denote $r=|M|$. Then $r\le m-1$ and $|W|=2r$.
Let $H=G-M$. 
Since every edge of $M$ has both end vertices in $W$, deleting $M$ does not affect the subgraph induced by $U$. Hence
\[
H[U]=G[U].
\]
Thus 
\[
d_H(u)=d_G(u)\ge \max\{k+1,2m-2\}
\] 
for every $u\in U$.
We use $G[U]$ throughout. 
We first claim that $G[U]$ has an edge.
\begin{claim}\label{clm:edge_exists}
     $G[U]$ has an edge.
\end{claim}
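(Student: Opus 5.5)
Suppose for contradiction that $G[U]$ has no edge, so $U$ is an independent set in $G$. We first record that $U\neq\emptyset$: since $|V(G)|\ge 2m$ and $|W|=2r\le 2m-2$, we have $|U|\ge 2$. The plan is to count degrees. Every $u\in U$ has all its neighbors in $W$, so
\[
2m-2\le d_G(u)\le |W|=2r\le 2m-2 .
\]
Hence $r=m-1$, $|W|=2m-2$, and every vertex of $U$ is adjacent to every vertex of $W$. Also $k+1\le d_G(u)=2m-2$.

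The main step is to use this complete join between $U$ and $W$ to enlarge $M$, contradicting its maximality. Take an edge $ab\in M$ and two distinct vertices $u,u'\in U$. Replace $ab$ by the two edges $au$ and $bu'$, which gives the matching
\[
M'=(M\setminus\{ab\})\cup\{au,bu'\}
\]
of size $r+1=m$. It remains to show that $M'$ is $k$-removable, that is, that $G-M'$ is $k$-connected.

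The intended argument for $k$-connectivity is as follows. Let $S$ be a set with $|S|\le k-1$. Every vertex of $U\setminus S$ is adjacent in $G-M'$ to all of $W\setminus S$, except that $u$ misses $a$ and $u'$ misses $b$. Meanwhile $|W|=2m-2\ge k+1$, so $|W\setminus S|\ge 2$. We then split into two cases.
\begin{itemize}
\item If $U\setminus S$ is nonempty, every vertex of $U\setminus S$ has a neighbor in $W\setminus S$. Any two vertices of $W\setminus S$ lie on a common path through some vertex of $U\setminus S$, using that such a vertex misses at most one vertex of $W$, together with $|U|\ge2$. This should make $G-M'-S$ connected.
\item If $U\subseteq S$, then $|S|\ge 2$ and $W\setminus S$ must be connected by edges of $G[W]-M'$ alone. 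Here we compare with $H-S$: since $H=G-M$ is $k$-connected, $H-S$ is connected. Then $G-M'-S$ equals $H-S$ with $ab$ restored and $au,bu'$ deleted, and $u,u'\in S$, so nothing is lost.
\end{itemize}

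The main obstacle is the first case, where $u$ or $u'$ survives but its only contact with $W\setminus S$ may be through $a$ or $b$. The fallback here is again to compare with $H-S$, which is connected: the edges of $H-S$ destroyed by passing to $G-M'-S$ are only $au$ and $bu'$. Each of $u,u'$ keeps at least $2m-3-|S\cap W|$ neighbors, and the endpoints $a,b$ remain linked via other vertices of $U$. If $|U|=2$ and this still fails, the choice of $u,u'$ or of the edge $ab$ can be varied.

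Finally, the case $m=1$ is trivial (then $r=0$ and $|W|=0$, so $U=V(G)$ and $G$ has an edge). With the enlargement established, $M'$ is a $k$-removable matching of size $m$, contradicting that $G$ is a counterexample. Hence $G[U]$ has an edge.
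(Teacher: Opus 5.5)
Your route differs from the paper's, and it can be made to work, but as written the key step is not established. The paper does not enlarge $M$. It applies Mader's theorem to the $4$-cycle $uxvyu$ in $H+xy$ to trade $xy$ for a removable edge of that cycle. This gives a $k$-removable matching of the same size $r$ for which $G-V(M')$ has fewer components than $G[U]$, contradicting the tie-breaking choice of $M$. Your direct enlargement $M'=(M\setminus\{ab\})\cup\{au,bu'\}$ is a legitimate and more elementary alternative: it needs neither Mader's theorem nor the tie-break. However, the proof that $G-M'$ is $k$-connected is left incomplete, exactly in the case you flagged.

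Suppose $S\supseteq U\setminus\{u\}$ and $a\notin S$. This is possible whenever $|U|\le k$, not only when $|U|=2$. Then $U\setminus S=\{u\}$ and $u$ is not adjacent to $a$ in $G-M'$. So your claim that any two vertices of $W\setminus S$ are joined through a vertex of $U\setminus S$ is false here. Your fallback, that $a$ and $b$ ``remain linked via other vertices of $U$'', fails for the same reason: all those vertices lie in $S$. Saying the choice of $u,u'$ or $ab$ ``can be varied'' is not an argument. Counting how many neighbours $u$ and $u'$ keep is beside the point, because the vertex at risk is $a$ (symmetrically $b$).

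The missing ingredient is the degree of $a$ itself: $d_{G-M'}(a)=d_G(a)-1\ge k>|S|$. A complete check runs as follows.

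\begin{itemize}
\item Suppose $G-M'-S$ has sides $X,Y$ with no edges between them. Since $H-S$ is connected and $E(G-M')=(E(H)\cup\{ab\})\setminus\{au,bu'\}$, one of $au,bu'$ must cross. By symmetry, take $a\in X$ and $u\in Y$.
\item Since $N_{G-M'}(u)=W\setminus\{a\}$, we get $X\cap W=\{a\}$. Also $Y\cap W\neq\emptyset$, because $|W\setminus S|\ge 2$.
\item Any vertex of $U\setminus(S\cup\{u,u'\})$ sees all of $W$ in $G-M'$, so it would join $X$ and $Y$. Hence no such vertex exists.
\item If $u'\notin S$, then $u'\in X$ because it sees $a$. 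This forces $W\setminus S=\{a,b\}$ with $b\in Y$, contradicting $ab\in E(G-M')$. So $u'\in S$.
\item Therefore $X=\{a\}$, that is, $N_{G-M'}(a)\subseteq S$. This contradicts $d_{G-M'}(a)\ge k$.
\end{itemize}

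With this patch, your argument directly yields a $k$-removable matching of size $m$. It therefore contradicts the maximality of $M$ rather than the secondary choice the paper relies on.
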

\begin{proof}
Suppose not.
Since $|U|=|V(G)|-2r\ge 2$, $U$ has distinct vertices $u$ and $v$.
As $U$ is independent, $N_H(u)\subseteq W$, so 
\[
2m-2 \le \delta(G) \le d_H(u) \le |W|=2|M|\le 2m-2. 
\]
Hence $|M|=m-1$ and $N_H(u)=W$. Similarly, $N_H(v)=W$. 
The matching $M$ is nonempty since otherwise $G[U]=G$ has an edge.
Choose an edge $xy \in M$. 
Then the graph $H+xy$ is $k$-connected and has a cycle $C:=uxvyu$. 
Every vertex on $C$ has degree at least $k+1$ in $H+xy$, so by Theorem~\ref{thm:mader}, there is an edge $e$ on $C$ which is $k$-removable in $H+xy$.
Set
\[
M':=(M-\{xy\})\cup \{e\}.
\]
Then, since $G-M'=H+xy-e$, $M'$ is a $k$-removable matching of size $r$.
Furthermore, the edge on $C$ opposite to $e$ has both ends outside $V(M')$.
Hence, $G-V(M')$ has fewer components than $G[U]$, which contradicts the choice of $M$. 
Thus $E(G[U])\neq \emptyset$.    
\end{proof}

We now apply Theorem~\ref{thm:prescribed-set-removable-edge} to $H$ with prescribed set $W$.

\begin{claim}\label{clm:G[U]}
    $G[U]$ is a forest. For every leaf $u$ of $G[U]$, let $v_u$ denote the unique neighbor of $u$ in $U$. Then there exists $z_u\in W$ such that
    \[
        N_G(u)=(W\setminus\{z_u\})\cup\{v_u\}.
    \]
    Furthermore, $r=m-1$, $k=2m-3$, and $\delta(G)=2m-2$.
\end{claim}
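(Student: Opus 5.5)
We apply Theorem~\ref{thm:prescribed-set-removable-edge} to the $k$-connected graph $H=G-M$ with prescribed set $W=V(M)$, so that $U=V(H)\setminus W$ and $w=2r$. By Claim~\ref{clm:edge_exists}, $E(H[U])=E(G[U])\neq\emptyset$. Every $x\in U$ satisfies $d_H(x)=d_G(x)\ge\max\{k+1,2m-2\}\ge\max\{k+1,2r\}$, because $r\le m-1$. Hence the hypotheses of the theorem hold.

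Suppose the first alternative of the theorem occurs, so $H[U]$ contains an edge $e$ that is $k$-removable in $H$. Then $M\cup\{e\}$ is a matching, since both ends of $e$ lie outside $V(M)$. Moreover, $G-(M\cup\{e\})=H-e$ is $k$-connected. This gives a $k$-removable matching of size $r+1$, contradicting the maximality of $M$. Therefore the exceptional alternative holds. It gives that $w=k+1$, that $H[U]=G[U]$ is a forest, and that every leaf $u$ satisfies $N_H(u)=(W\setminus\{z_u\})\cup\{v_u\}$ for some $z_u\in W$.

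To transfer the neighborhood to $G$, note that $u\in U$ is not incident to any edge of $M$. Hence $N_G(u)=N_H(u)$, which is exactly the stated form.

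For the numerical conclusions, we have $k+1=w=2r$. Any leaf $u$ has $d_G(u)=|W|-1+1=2r$, so
\[
2m-2\le\delta(G)\le d_G(u)=2r\le 2m-2 .
\]
This forces $r=m-1$ and $\delta(G)=2m-2$. Finally, $k=2r-1=2m-3$.

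The only point needing care is checking that the degree hypothesis of Theorem~\ref{thm:prescribed-set-removable-edge} uses $w=2r\le 2m-2$. It also matters that leaves exist, which holds because a forest with at least one edge has a leaf. Beyond these checks the argument is a direct application of the theorem.
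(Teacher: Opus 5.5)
Your proposal is correct and follows essentially the same route as the paper. Both apply Theorem~\ref{thm:prescribed-set-removable-edge} to $H=G-M$ with prescribed set $W=V(M)$, use the maximality of $M$ to exclude a $k$-removable edge inside $G[U]$, and read off $r=m-1$, $k=2m-3$, and $\delta(G)=2m-2$ from the degree $2r$ of a leaf. Your version also spells out two steps the paper leaves implicit: that $N_H(u)=N_G(u)$ for $u\in U$, and that $w=2r\le 2m-2$ is what the degree hypothesis of the theorem needs.
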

\begin{proof}
    By the maximality of $M$, every edge of $G[U]$ is $k$-essential in $H$.
    Note that $E(G[U])\ne\emptyset$ by Claim~\ref{clm:edge_exists}, $|W|\le 2m-2$, and $d_H(u)\ge \max\{k+1,2m-2\}$ for each $u\in U$.
    Thus, by Theorem~\ref{thm:prescribed-set-removable-edge}, $2r=k+1$ and $G[U]$ is a forest.
    Moreover, for every leaf $u$ of $G[U]$, there exists $z_u\in W$ satisfying $N_G(u)=(W\setminus\{z_u\})\cup\{v_u\}$, where $v_u$ is the unique neighbor of $u$ in $U$.
    For such a leaf $u$,
\[
2m-2=\max\{k+1,2m-2\}\le d_H(u)=2r \le 2m-2. 
\]
It follows that $r=m-1$, $k=2m-3$, and $\delta(G)=2m-2$.
\end{proof}

If $k=1$, then $m=2$, so Lemma~\ref{lem:1-removable 2-matching} implies a contradiction since $G$ is not a cycle.
Hence $k\neq 1$. Since $k=2m-3$, $k\ge 3$ and $m \ge 3$.

Recall that we choose $M$ among maximum $k$-removable matchings of $G$ so that the number of components of $G[U]$ is minimum.
This choice yields a useful consequence of the replacement argument.

\begin{claim}\label{clm:ends-only}
    Let $T$ be a nontrivial component of $G[U]$, and let $p$ and $q$ be distinct leaves of $T$. If there exists an edge $xy \in M$ such that both $x$ and $y$ are adjacent to $p$ and $q$, then 
    \[
        N_G(x)\cap U= N_G(y) \cap U=\{p,q\}.
    \]
\end{claim}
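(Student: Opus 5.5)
We argue by contradiction, using the replacement argument from the proof of Claim~\ref{clm:edge_exists}. Suppose, say, that $x$ has a neighbor $s\in U\setminus\{p,q\}$; the case of $y$ is symmetric. Consider the cycle $C:=pxqyp$ in $H+xy$.

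First we check that $C$ satisfies the hypotheses of Theorem~\ref{thm:mader}. Since $H$ is $k$-connected, so is $H+xy$. The vertices $p,q\in U$ have degree at least $k+1$ in $H$. Each of $x,y$ has degree at least $\delta(G)=2m-2=k+1$ in $H+xy=G-(M\setminus\{xy\})$. Hence Theorem~\ref{thm:mader} gives an edge $e\in E(C)$ that is $k$-removable in $H+xy$.

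Set $M':=(M\setminus\{xy\})\cup\{e\}$. Exactly as in Claim~\ref{clm:edge_exists}, $G-M'=H+xy-e$ is $k$-connected, so $M'$ is a maximum $k$-removable matching of size $r$. The edge $e$ joins one of $p,q$ to one of $x,y$. Write $e=ab$ with $a\in\{p,q\}$ and $b\in\{x,y\}$, and let $a'$ and $b'$ be the other elements of $\{p,q\}$ and $\{x,y\}$.

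Then $U':=V(G)\setminus V(M')=(U\setminus\{a\})\cup\{b'\}$, and $b'$ is adjacent to $a'$. We compare the component counts of $G[U']$ and $G[U]$ as follows.
\begin{itemize}
\item Since $a$ is a leaf of $T$, removing it does not disconnect $T$. Also $T\ne\{a\}$, since $T$ is nontrivial. So $T-a$ is a single component containing $a'$.
\item Adding $b'$ attaches it to $T-a$ through $a'$.
\item Any further neighbor of $b'$ in $U\setminus\{a\}$ lying in a different component merges that component into this one.
\end{itemize}
Thus the count does not increase. It strictly decreases if $b'$ has a neighbor in another component of $G[U]$.

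The delicate point is $s$ lying in $T$ itself, or the removable edge landing at $x$ rather than $y$. We do not control which edge $e$ Mader's theorem gives, so minimality of components alone does not finish. We need a second potential to break ties.

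To handle this, we refine the extremal choice within the proof of the claim, using the structure from Claim~\ref{clm:G[U]}. Every leaf $u'$ of $G[U']$ must satisfy $N_G(u')=(V(M')\setminus\{z\})\cup\{v\}$, that is, it must be adjacent to all but one vertex of $V(M')$. This holds because $M'$ is also an optimal matching (same size, component count at most as large), so Claim~\ref{clm:G[U]} applies to it as well.

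Suppose $b'$ has a neighbor in $U$ other than $a'$, say $s$, and we may take $s$ in $T$ after the merging step above. Then $b'$ has degree at least $2$ in $G[U']$. If $b'$ has degree exactly $1$, i.e.\ $N_G(b')\cap U'=\{a'\}$, it is a leaf, and the check below is vacuous except for the one edge.

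Now apply the leaf condition to a leaf of $G[U']$. Since $a$ was a leaf adjacent only to $v_a$, the vertex $v_a$ may become a leaf if $T$ was a path. Such a leaf must be adjacent to all of $V(M')$ except one vertex, and in particular adjacent to $a$ or to $b$. We derive a contradiction from $N_G(v_a)$, since $v_a$ was a non-leaf or a leaf whose neighborhood omitted $z_{v_a}$. Alternatively, $G[U']$ might contain a cycle through $b'$: then $b'$ sees two vertices of $T$, namely $a'$ and $s$, and the tree $T-a$ plus $b'$ contains a cycle, contradicting that $G[U']$ is a forest.

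The second alternative is the clean route, and we would pursue it first. If $b'$ has a neighbor $s\in U\setminus\{a,p,q\}$, then either $s\in T$, giving a cycle in $G[U']$ and contradicting Claim~\ref{clm:G[U]} applied to $M'$, or $s\notin T$, giving fewer components and contradicting the choice of $M$.

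We also need the statement for the vertex that stays in the matching. By symmetry of the roles of $p,q$ and $x,y$ in $C$, each of the two $b'$ choices can arise. We handle this by the observation that an opposite edge of $C$ is removable, so we could choose which $b'$ remains. If this is not available, we will argue that the edges of $C$ come in opposite pairs and apply Mader's theorem to the alternative cycle, or deduce removability of both. This asymmetry is the main obstacle.
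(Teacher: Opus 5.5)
\textbf{There is a genuine gap: the argument proves the conclusion for only one of $x,y$, and you cannot control which.}

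Your argument correctly handles the vertex $b'\in\{x,y\}$ that is not covered by the removable edge $e$. Since $b'$ enters $U'=(U\setminus\{a\})\cup\{b'\}$ and is adjacent to $a'$, any further neighbor $s\in U\setminus\{p,q\}$ leads to a contradiction in one of two ways:
\begin{itemize}
\item if $s\in T$, it closes a cycle through the connected tree $T-a$, contradicting that $G[U']$ is a forest (Theorem~\ref{thm:mader} plus maximality of $M'$);
\item if $s\notin T$, it merges two components, contradicting the choice of $M$.
\end{itemize}

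The claim, however, asserts the conclusion for both $x$ and $y$, and the later claims use both. For example, Claim~\ref{clm:path-characterization} needs $N_G(a)\cap U=N_G(b)\cap U=\{u,u'\}$ for a whole edge $ab\in M$. With the symmetric $4$-cycle $pxqyp$, Theorem~\ref{thm:mader} only guarantees that some edge of $C$ is removable. You cannot control whether that edge meets $x$ or $y$, so you only obtain the conclusion for one uncontrolled vertex. Your suggested remedies do not close this. Nothing implies that the edge opposite a removable edge of $C$ is also removable, and Mader's theorem gives no way to ``deduce removability of both.''

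The missing idea is to choose a cycle that avoids the vertex you want to conclude about. To prove $N_G(t')\cap U=\{p,q\}$ for $t'\in\{x,y\}$, let $t$ be the other end of $xy$ and let $P$ be the $p$--$q$ path in $T$. Apply Theorem~\ref{thm:mader} to the cycle $tpPqt$ in $H+xy$; all its vertices have degree at least $k+1$ there. Since $t'\notin V(C)$, the vertex $t'$ is never covered by $M'=(M\setminus\{xy\})\cup\{e\}$. There are then two cases.
\begin{itemize}
\item If $e=ts$ with $s\in\{p,q\}$, your argument goes through verbatim with $b'=t'$. Here $T-s$ is connected because $s$ is a leaf.
\item If $e\in E(P)$, then both $t$ and $t'$ enter $U'$.
  \begin{itemize}
  \item If $T\neq pq$, then $P$ has length at least two, so one of $p,q$ survives. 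It forms a triangle with $t$ and $t'$ in the forest $G-V(M')$, a contradiction.
  \item If $T=pq=e$, then $tt'$ is an edge of $G-V(M')$. Any neighbor of $t'$ in $U\setminus\{p,q\}$ lies in another component of $G[U]$, so it lowers the component count, again a contradiction.
  \end{itemize}
\end{itemize}
Running this argument once with $t=x$ and once with $t=y$ gives the full claim.
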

\begin{proof}
    Let $P$ be the path from $p$ to $q$ in $T$.
    Fix $t\in \{x,y\}$, and let $t'$ be the other end of $xy$.
    Since $t$ was arbitrarily chosen from $\{x,y\}$, it suffices to show that $N_G(t')\cap U=\{p,q\}$.
    The graph $H+xy$ contains the cycle $C:=tpPqt$.
    By Theorem~\ref{thm:mader}, the cycle $C$ contains a $k$-removable edge $e$ in $H+xy$. Set 
    \[
    M':=(M-\{xy\})\cup\{e\}.
    \]
    Then $M'$ is another maximum $k$-removable matching of $G$. 
    By Theorem~\ref{thm:mader} and the choice of $M$, $G-V(M')$ is a forest and has at least as many components as $G[U]$.

    If $e\in E(P)$, then necessarily $T=pq$ and $e=pq$.
    Otherwise, $t$, $t'$, and at least one of $p$ and $q$ are in $G-V(M')$ and induce a triangle, a contradiction.
    Thus either $e=ts$ for some $s\in \{p,q\}$ or $T=e=pq$.

    Suppose that  $e=ts$ for some $s\in \{p,q\}$. Let $s'$ be the other vertex in $\{p,q\}$ distinct from $s$. 
    If $t'$ has a neighbor in $V(T)\setminus\{p,q\}$, then
    the neighbor and $s'$ are joined by a path in $T-s$, which together with $t'$ forms a cycle in $G-V(M')$, a contradiction.
    If $t'$ has a neighbor in another component of $G[U]$, then $G-V(M')$ has fewer components than $G[U]$, which contradicts the choice of $M$.
    Therefore $N_G(t')\cap U=\{p,q\}$.

    Suppose $T=e=pq$.
    Then both $t$ and $t'$ belong to $G-V(M')$. 
    Since $tt' \in E(G)$, any neighbor of $t'$ in $U\setminus \{p,q\}$ makes $G-V(M')$ have fewer components than $G[U]$, which contradicts the choice of $M$.
    Thus $N_G(t')\cap U=\{p,q\}$.
\end{proof}

 To characterize the structure of $G[U]$, we first show that $G[U]$ is a union of paths.

\begin{claim}\label{clm:path-components}
    Every nontrivial component of $G[U]$ is a path.
\end{claim}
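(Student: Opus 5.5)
We argue by contradiction. Suppose that some nontrivial component $T$ of $G[U]$ is not a path. By Claim~\ref{clm:G[U]}, $T$ is a tree. A tree that is not a path has a vertex of degree at least $3$, and hence has at least three leaves. Let $p$, $q$ and $s$ be three distinct leaves of $T$. The plan is to find an edge $xy\in M$ to which Claim~\ref{clm:ends-only} applies for two of these leaves, and then to use the third leaf to reach a contradiction.

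First we record the neighborhoods of the leaves. By Claim~\ref{clm:G[U]}, each leaf $u\in\{p,q,s\}$ satisfies $N_G(u)=(W\setminus\{z_u\})\cup\{v_u\}$. So $u$ is adjacent to every vertex of $W$ except the single vertex $z_u$. Also by Claim~\ref{clm:G[U]}, $r=m-1$, and we showed $m\ge 3$, so $M$ has at least two edges.

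Next we choose the pair of leaves. Each of the three vertices $z_p,z_q,z_s$ lies in some edge of $M$. There are three such vertices but we only need a coincidence among their edges, which we get as follows.
\begin{itemize}
\item If $r\ge 3$, take any pair, say $p,q$. The vertices $z_p,z_q$ meet at most two edges of $M$, so some edge $xy\in M$ contains neither of them.
\item If $r=2$, the pigeonhole principle gives two of $z_p,z_q,z_s$ lying in the same edge of $M$. Relabel so that these are $z_p$ and $z_q$. Then the other edge $xy\in M$ contains neither of them.
\end{itemize}
In both cases we obtain an edge $xy\in M$ with $x,y\notin\{z_p,z_q\}$. Therefore both $x$ and $y$ are adjacent to both $p$ and $q$.

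Now we apply Claim~\ref{clm:ends-only} to the leaves $p,q$ of $T$ and the edge $xy$. It gives
\[
N_G(x)\cap U=N_G(y)\cap U=\{p,q\}.
\]
In particular, neither $x$ nor $y$ is adjacent to $s$. But $s$ is adjacent to every vertex of $W$ except $z_s$, so both $x$ and $y$ would have to equal $z_s$. This is impossible because $x\neq y$. This contradiction shows that $T$ is a path.

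The step needing care is the choice of the pair of leaves when $r=2$, that is, $m=3$. There the pigeonhole argument on the edges containing $z_p,z_q,z_s$ is what makes it work. Once a suitable edge $xy$ is found, Claim~\ref{clm:ends-only} finishes the argument directly.
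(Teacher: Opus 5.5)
Your proof is correct and is essentially the paper's argument. Both take two leaves $p,q$ and an edge of $M$ avoiding $z_p,z_q$, apply Claim~\ref{clm:ends-only}, and use the third leaf's adjacency to all of $W$ except one vertex to get the contradiction. The only difference is organizational. The paper first rules out an edge of $M$ missing all three $z$'s to obtain $r\le 3$, and then asserts that a suitable pair exists. Your split into $r\ge 3$ and $r=2$ (by pigeonhole) makes that existence step explicit.
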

\begin{proof}
    Recall that $G[U]$ is a forest, $k\ge 3$, and $r=(k+1)/2\ge 2$.
    To show that every nontrivial component of $G[U]$ is a path, suppose to the contrary that a component $T$ of $G[U]$ has distinct leaves $u_1,u_2,u_3$.
    Denote $z_i:=z_{u_i}$ for each $i\in \{1,2,3\}$.
    If some edge $ab\in M$ is disjoint from $\{z_1,z_2,z_3\}$, then both $a$ and $b$ are adjacent to all three leaves, which contradicts Claim~\ref{clm:ends-only}. 
    Hence every edge of $M$ is incident to $z_1$, $z_2$, or $z_3$. 
    Thus $r \le 3$.
    Moreover, there are distinct $i,j \in \{1,2,3\}$ and an edge $e \in M$ such that $V(e)\cap\{z_i,z_j\}=\emptyset$. 
    Then both $u_i$ and $u_j$ are adjacent to the vertices in $V(e)$.
    By Claim~\ref{clm:ends-only}, $N_G(x)\cap U=\{u_i,u_j\}$ for a vertex $x$ in $V(e)$.
    On the other hand, for $\ell\in  \{1,2,3\} \setminus\{i,j\}$, at least one of the vertices in $V(e)$ is different from $z_{\ell}$, and hence it is adjacent to $u_{\ell}$, a contradiction.
    Therefore every nontrivial component of $G[U]$ is a path.
\end{proof}

We establish an additional property for the case $k=3$.

\begin{claim}\label{clm:k3-same-edge}
Suppose $k=3$. Let $P$ be a nontrivial path of $G[U]$ with two ends $u$ and $u'$. 
Then $z_u$ and $z_{u'}$ lie on the same edge of $M$.
\end{claim}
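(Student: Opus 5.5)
We argue by contradiction. Here $k=3$ forces $m=3$ and $r=2$ by Claim~\ref{clm:G[U]}, so $M=\{x_1y_1,x_2y_2\}$, $|W|=4$, and every leaf $u$ of $G[U]$ satisfies $N_G(u)=(W\setminus\{z_u\})\cup\{v_u\}$. Suppose $z_u$ and $z_{u'}$ lie on different edges of $M$, say $z_u=x_1$ and $z_{u'}=x_2$. Then:
\begin{itemize}
\item $y_1$ and $y_2$ are adjacent to both $u$ and $u'$;
\item $x_1$ is adjacent to $u'$ but not to $u$;
\item $x_2$ is adjacent to $u$ but not to $u'$.
\end{itemize}
Claim~\ref{clm:ends-only} does not apply directly, since no edge of $M$ has both ends adjacent to both leaves. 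Instead I would rerun the replacement argument behind Claims~\ref{clm:edge_exists} and~\ref{clm:ends-only} on a suitable cycle.

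The main step is as follows. In $H+x_1y_1$, take the cycle $C:=u\,y_1\,u'\,P^{-1}\,u$, or alternatively $u\,y_1\,x_1\,u'\,P^{-1}\,u$ if that is more convenient. Every vertex of $C$ has degree at least $4=k+1$ in $H+x_1y_1$: vertices of $U$ keep their $G$-degree $\ge 2m-2=4$, and $y_1$ regains its $G$-degree. By Theorem~\ref{thm:mader}, $C$ contains an edge $e$ that is $3$-removable in $H+x_1y_1$. Then $M':=\{x_2y_2,e\}$ is again a maximum $3$-removable matching of $G$. By the choice of $M$, and by the same argument as in Claim~\ref{clm:G[U]} applied to $M'$, the graph $G-V(M')$ is a forest with at least as many components as $G[U]$. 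Moreover, its leaves obey the same neighbourhood description relative to $V(M')$.

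Next comes the case analysis on the position of $e$.
\begin{itemize}
\item If $e$ is an edge of $P$ not incident to $u'$, then $x_1,y_1,u'$ all lie outside $V(M')$ and span the triangle $x_1y_1u'$, a contradiction.
\item If $e=y_1u$ or $e=y_1u'$ (or $e=x_1u'$ when the second cycle is used), then $x_1$ enters the new complement. Since $x_1$ is adjacent to $u'$ and to the rest of the structure, either $x_1$ closes a cycle with a remaining part of $P$, or $x_1$ joins the component of $u'$ to another component (reducing the component count), or $u'$ stops being a leaf. In the last situation I would examine the new leaves against the forced pattern $N_G(\ell)=(V(M')\setminus\{z\})\cup\{v_\ell\}$, which requires a leaf to see three of the four vertices of $V(M')$.
\item If $e$ is the edge of $P$ at $u'$, I would switch to the symmetric cycle in $H+x_2y_2$, in which the roles of $u$ and $u'$ are swapped. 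The offending edge is then at $u$, and we are back in the first bullet.
\end{itemize}

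The step I expect to be the main obstacle is the degenerate case where $P$ is very short, above all $P=uu'$. Here the edge $e$ can be $uu'$ itself, so that both $x_1$ and $y_1$ pass into the new complement and no triangle is immediately visible. In this case I would first try counting: the new complement contains the edge $x_1y_1$, and $x_1$ has degree at least $4$ in $G$ with at most three neighbours in $V(M')\cup\{y_1\}$. Hence $x_1$ must have a neighbour in $U\setminus\{u,u'\}$. This either merges components, contradicting minimality, or creates a leaf of the new forest whose neighbourhood cannot have the required form $(V(M')\setminus\{z\})\cup\{v\}$. If this counting fails, I would fall back on choosing between the two symmetric cycles, through $y_1$ and through $y_2$, so as to avoid this configuration.
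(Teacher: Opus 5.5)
Your setup and first case are sound. In $H+x_1y_1$ every vertex of $C=u\,y_1\,u'\,P^{-1}\,u$ has degree at least $4$, so Theorem~\ref{thm:mader} gives a $3$-removable edge $e$ on $C$. Then $M'=\{x_2y_2,e\}$ is again a maximum $3$-removable matching, $G-V(M')$ is a forest with at least as many components as $G[U]$, and an edge of $P$ not at $u'$ yields the triangle $x_1y_1u'$. Your alternative cycle through $x_1y_1$ gives nothing: $x_1y_1$ is trivially $3$-removable in $H+x_1y_1$, so Theorem~\ref{thm:mader} may just return $M'=M$.

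The other outcomes are the whole difficulty, and none of them is closed. Write $P=u_0u_1\cdots u_t$.
\begin{itemize}
\item \emph{The ``switch'' in your third case is invalid.} Theorem~\ref{thm:mader} applied to $u'\,y_2\,u\,P\,u'$ in $H+x_2y_2$ yields only \emph{some} removable edge. It may be $u_0u_1$, $y_2u$ or $y_2u'$, which are the mirror images of your bad cases, so you are not back in the first case. For $t=1$ the first case is vacuous in both orientations.
\item \emph{Case $e=u_{t-1}u_t$, $t\ge2$.} Acyclicity and the component count force the new component to be the path $x_1y_1u_0\cdots u_{t-2}$. Its leaf $x_1$ sees $u'$ and may see two of $x_2,y_2,u_{t-1}$. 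For $t=2$ the other leaf satisfies $N_G(u_0)=(V(M')\setminus\{u_2\})\cup\{y_1\}$ exactly. So the leaf pattern is met and no contradiction arises.
\item \emph{Case $e=y_1u$.} Here $x_1$ just becomes a pendant vertex at $u'$, and the leaf pattern merely forces $N_G(x_1)=\{y_1,x_2,y_2,u'\}$. The ``last situation'' of your second case is never resolved.
\item \emph{Case $t=1$, $e=uu'$.} Your count is wrong. Since $z_{u'}=x_2$, the vertex $x_1$ is adjacent to $u'$, so all its neighbours may lie in $\{y_1,u',x_2,y_2\}$. No neighbour in $U\setminus\{u,u'\}$ is forced.
\end{itemize}

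So a single replacement step, checked only against acyclicity, component count and the leaf pattern, eliminates one outcome of Mader's theorem and leaves the rest open. The missing ingredient is the separator witnessing that $u_0u_1$ is $3$-essential in $H$; the paper uses it directly. In your notation, the argument runs as follows.
\begin{itemize}
\item Take $S$ with $|S|\le2$ such that $u_0u_1$ is a cut edge of $H-S$, with sides $X\ni u$ and $Y\ni u_1$.
\item First show $S\ne\{y_1,y_2\}$. Otherwise $x_2\in X$ has $H$-neighbours at most $y_1,u$ in $W\cup V(P)$. So it has a neighbour in a nontrivial component $Q\subseteq X$ of $G[U]$. Both ends of $Q$ miss $x_1\in Y$, and Claim~\ref{clm:ends-only} applied to $x_2y_2$ then contradicts $x_2u\in E(G)$.
\item Degree counting then gives $t\ge2$ and $S=\{u',s_W\}$ with $s_W\in\{y_1,x_2,y_2\}$. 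It follows that $G[U]=P$ and every interior vertex $x$ of $P$ has $N_G(x)\cap W=\{x_1,s_W\}$.
\item The symmetric argument at $u_{t-1}u_t$ gives $N_G(x)\cap W=\{x_2,s'_W\}$. Hence $s_W=x_2$ and $s'_W=x_1$.
\item Then $\{u,u'\}$ separates $\{y_1,y_2\}$ from $\{x_1,x_2\}\cup(V(P)\setminus\{u,u'\})$ in $H$, contradicting $3$-connectivity.
\end{itemize}
To rescue the replacement strategy, you would at least need to use that a tying $M'$ is itself an admissible choice, so the earlier claims apply to it. It is not clear that this closes the argument.
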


\begin{proof}
Suppose to the contrary that $z_u$ and $z_{u'}$ lie on different edges of $M$.
Since $k=3$, by Claim~\ref{clm:G[U]}, $r=2$ and $\delta(G)=4$. 
Let $M=\{aa',bb'\}$ where $z_u=a$ and $z_{u'}=b$. 
Then
\[
N_G(u)\cap W=\{a',b,b'\} \quad \text{and} \quad  N_G(u')\cap W=\{a,a',b'\}.
\]
Denote $P:=u_0u_1\dotsm u_t$, where $u_0=u$ and $u_t=u'$.
Since no edge of $G[U]$ is $3$-removable in $H$, $H-u_0u_1$ has a vertex cut $S$ with $|S|\le 2$. 
Moreover, $u_0u_1$ is a cut edge of $H-S$ since $H-S$ is connected.
Let $X$ and $Y$ be the components of $H-u_0u_1-S$ containing $u_0$ and $u_1$, respectively.

We first show that $S\neq \{a',b'\}$. 
Suppose to the contrary that $S=\{a',b'\}$.
Then the subpath of $P$ from $u_1$ to $u'$ is contained in $Y$, so $b\in X$ and $a\in Y$.
Since $N_H(b)\cap\bigl(W\cup V(P)\bigr)\subseteq\{a',u\}
$ and $\delta(H)\ge 3$, $b$ has a neighbor in some component $Q$ of $G[U]$ different from $P$.
Then $Q$ cannot be isolated
since an isolated vertex of $G[U]$ is adjacent to every vertex of $W$, including $a\in Y$, by the minimum degree condition of $G$.
Thus $Q$ is a nontrivial path. Let $p$ and $q$ be the end vertices of $Q$.
Since $Q$ lies entirely in $X$, $z_p=z_q=a$. 
Hence both $b$ and $b'$ are adjacent to $p$ and $q$, and Claim~\ref{clm:ends-only} implies $N_G(b)\cap U=\{p,q\}$, which contradicts $bu\in E(G)$. 

If $t=1$ or $S\subseteq W$, then $u' \in Y$, so common neighbors $a'$ and $b'$ of $u$ and $u'$ must belong to $S$, a contradiction. 
Hence $t\ge 2$ and $S \cap U \neq \emptyset$.
If $S\cap W\subseteq \{a\}$, then $N_G(u)\cap W=\{a',b,b'\}\subseteq X$, so 
\[
d_G(u_1)\le |N_G(u_1)\cap U|+|N_G(u_1)\cap W| \le 2+1=3,
\]
a contradiction.
Therefore $S=\{s_U,s_W\}$ for some $s_U\in U$ and $s_W \in \{a',b,b'\}$.
Then the two vertices in $\{a',b,b'\}\setminus\{s_W\}$ are in $X$.
Since $u_1\in Y$ and $\delta(G)=4$, $N_G(u_1)\cap W=\{a,s_W\}$ and  $a \in Y$.
Note that $u'$ is adjacent to both $a$ and the vertex in $\{a',b'\}\setminus\{s_W\}$.
Therefore $s_U=u'$.

We now show that $G[U]=P$.
Let $Q$ be a component of $G[U]$ different from $P$.
Since $s_U=u'$, $Q$ lies entirely in $X$ or entirely in $Y$.
If $Q$ is isolated, then its vertex is adjacent to every vertex of $W$, which contradicts that $u_0u_1$ is a cut edge of $H-S$.
Suppose that $Q$ is nontrivial. 
Then $Q$ has two end vertices $p$ and $q$.
If $Q\subseteq X$, then  $z_p=z_q=a$ and Claim~\ref{clm:ends-only} applied to $bb'$ contradicts $bu \in E(G)$.
If $Q\subseteq Y$, then each of $p$ and $q$ is not adjacent to the two vertices of $\{a',b,b'\}\setminus\{s_W\}$, which contradicts Claim~\ref{clm:G[U]}.
Therefore $G[U]=P$. 

Every internal vertex of $P$ is in $Y$ since $s_U=u'$.
Hence, by the degree condition of $G$,
 \[
        N_G(x)\cap W=\{a,s_W\}
        \qquad
        \text{for every }x\in V(P)\setminus\{u,u'\},
    \]
and there is no edge of $H$ between $\{a',b,b'\}\setminus\{s_W\}$ and $\{a\}\cup\bigl(V(P)\setminus\{u,u'\}\bigr)$.

Applying the same argument to the edge $u_{t-1}u_t$ instead of $u_0u_1$, we obtain some $s_W'\in \{a,a',b'\}$ such that 
\[
        N_G(x)\cap W=\{b,s_W'\}
        \qquad
        \text{for every }x\in V(P)\setminus\{u,u'\}.
\]
Moreover, there is no edge of $H$ between $\{a,a',b'\}\setminus\{s_W'\}$ and $\{b\}\cup\bigl(V(P)\setminus\{u,u'\}\bigr)$.
Then $s_W=b$ and $s_W'=a$.
Therefore  there is no edge of $H$
between
\[
  \{a',b'\}
  \quad\text{and}\quad
  \{a,b\}\cup\bigl(V(P)\setminus\{u,u'\}\bigr).
\]
Since $G[U]=P$, $H-\{u,u'\}$ is disconnected, which 
contradicts the $3$-connectivity of $H$.
\end{proof}

Recall that $k$ is an odd integer with $k\ge3$. 
We now determine the structure of $G[U]$.

\begin{claim}\label{clm:path-characterization}
\[G[U] \cong \begin{cases}
    P_2 \text{ or } P_3 & \text{ if $k\in\{3,5\}$,} \\
    P_2 & \text{otherwise.}
\end{cases} \]
\end{claim}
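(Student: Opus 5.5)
We need to show that $G[U]$ is a single path $P_2$ or $P_3$, and that $P_3$ occurs only when $k\in\{3,5\}$. The argument has three parts: count the vertices of $U$, rule out isolated vertices and multiple components, and bound the length of a path component.

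\textbf{Setup.} By Claim~\ref{clm:G[U]} we have $r=m-1$, $|W|=2r=k+1$ and $\delta(G)=2m-2=k+1$. Every leaf $u$ of $G[U]$ has degree exactly $k+1$ and misses exactly the vertex $z_u$ of $W$. By Claim~\ref{clm:path-components}, every nontrivial component of $G[U]$ is a path. An isolated vertex of $G[U]$ has all its neighbors in $W$, so it is adjacent to all of $W$ because $|W|=\delta(G)$.

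\textbf{Step 1: only one component.} First I exclude isolated vertices. The idea is to repeat the replacement argument of Claim~\ref{clm:edge_exists}. Take an isolated vertex $v$ and a nontrivial path $T$ with end $p$, and pick $xy\in M$ with $z_p\notin\{x,y\}$. Then $xpx'$-type cycles such as $v x p y v$ exist in $H+xy$. By Theorem~\ref{thm:mader} some edge $e$ of such a cycle is $k$-removable, and the swap $M'=(M-\{xy\})\cup\{e\}$ either merges components, contradicting the minimality in the choice of $M$, or creates a cycle in $G-V(M')$, contradicting Theorem~\ref{thm:prescribed-set-removable-edge}. If all components are isolated, Claim~\ref{clm:edge_exists} already gives a contradiction.

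Next I exclude two nontrivial paths $P$ and $Q$. Since $r\ge2$, for most edges $xy\in M$ both $x$ and $y$ are adjacent to both ends of $P$, and similarly for $Q$. Claim~\ref{clm:ends-only} then forces $N_G(x)\cap U$ to equal both sets of ends, which is impossible. The only escape is when each edge of $M$ meets $\{z_p,z_q\}$ or the corresponding set for $Q$. This bounds $r\le 4$, and the remaining small cases are handled similarly, using Claim~\ref{clm:k3-same-edge} when $k=3$.

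\textbf{Step 2: bounding the path length.} Now $G[U]=P=u_0\cdots u_t$. The internal vertices $u_i$ have degree $\ge k+1$ and exactly two neighbors in $U$, so each is adjacent to at least $k-1$ of the $k+1$ vertices of $W$. Pick an edge $xy\in M$ avoiding $z_{u_0}$, $z_{u_t}$, and also avoiding the at most two $W$-vertices missed by some internal vertex. Such an edge exists when $r$ is large relative to the number of forbidden vertices. Then $x$ and $y$ are adjacent to both ends, and Claim~\ref{clm:ends-only} gives $N_G(x)\cap U=\{u_0,u_t\}$, contradicting adjacency to an internal vertex. So we may assume every edge of $M$ meets the forbidden set.

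With a single internal vertex the forbidden set has at most $4$ vertices, which caps $r$ (hence $k$) at a small value; the same reasoning applied to $u_1$ and $u_2$ when $t\ge3$ caps things further. Counting in this way should give $t\le2$ always, and $t=2$ only when $r\le3$, i.e.\ $k\le5$.

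\textbf{Main obstacle.} The delicate part is the residual case $k=5$ with $t=3$ or longer paths, and the $k=3$ cases, where the edges of $M$ can all be blocked by the forbidden set. For these I would argue directly via a small vertex cut of $H-u_0u_1$, as in the proof of Claim~\ref{clm:k3-same-edge}, combined with the degree constraints, to show that $H$ has a cut of size less than $k$.
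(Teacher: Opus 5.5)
\paragraph{Verdict.} Your Step~1 is essentially sound, but Step~2 has a genuine gap: it does not rule out paths of order at least four, and its counting does not give the threshold $k\le 5$ for $P_3$.

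\paragraph{Step 1.} In the swap along $vxpyv$, a removable edge $e\in\{vx,vy\}$ lowers the number of components, and $e\in\{xp,py\}$ creates the triangle $vxy$ in $G-V(M')$. The two-path case, which you leave at ``handled similarly'', needs no case analysis. A single edge $ab\in M$ disjoint from $\{z_u,z_{u'}\}$ already suffices; it exists by Claim~\ref{clm:k3-same-edge} when $k=3$, and because $r\ge 3$ when $k\ge 5$. Claim~\ref{clm:ends-only} gives $N_G(a)\cap U=N_G(b)\cap U=\{u,u'\}$. Every vertex of degree at most one in another component of $G[U]$ misses at most one vertex of $W$, so it is adjacent to $a$ or $b$, a contradiction.

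\paragraph{The gap in Step 2.} First, your forbidden-set count only gives $r\le 4$, so it still allows an internal vertex when $k=7$. The sharp bound comes from applying Claim~\ref{clm:ends-only} to \emph{every} edge of $M$ that avoids $\{z_u,z_{u'}\}$. Let $B$ be the union of the at most two edges of $M$ containing $z_u$ or $z_{u'}$. Then every internal vertex $y$ satisfies $N_G(y)\cap W\subseteq B$, so $k+1\le 2+|B|\le 2+\min\{k-1,4\}$. This forces $k\in\{3,5\}$, $|B|=k-1$, and $N_G(y)\cap W=B$.

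More seriously, no counting argument can give $t\le 2$. Once $N_G(y)\cap W=B$, every vertex of $P$ has degree exactly $k+1$, whatever the length of $P$. You are right that a separator argument is needed, but the end edge $u_0u_1$ is the wrong edge to use. The vertices $u_0$ and $u_1$ have only $k-2$ common neighbors, so its separator keeps a free vertex. For instance, $S=(B\setminus\{z_{u_0}\})\cup\{u_t\}$ is compatible with everything derived so far, and nothing is forced.

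\paragraph{The missing idea.} Use an edge $xy$ joining two consecutive \emph{internal} vertices of $P$.
\begin{itemize}
\item Any $S$ with $|S|\le k-1$ making $xy$ a cut edge of $H-S$ must contain every common neighbor of $x$ and $y$. Hence $B\subseteq S$, and so $S=B$.
\item Since $|B|=k-1=2(r-1)$, some $ab\in M$ is disjoint from $B$, and $a$ is adjacent to both $u$ and $u'$.
\item Follow $P$ from $x$ to $u$, then take $u\,a\,u'$, then follow $P$ from $u'$ to $y$. This is an $x$--$y$ path in $H-B-xy$.
\end{itemize}
So $xy$ is not a cut edge of $H-B$, i.e.\ $xy$ is $k$-removable in $H$. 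This contradicts the maximality of $M$. Note that the contradiction is with maximality, not with $H$ having a cut of size less than $k$ as you propose.
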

\begin{proof}
    Take a nontrivial component $P$ of $G[U]$.
    Such a component $P$ exists by Claim~\ref{clm:edge_exists}. 
    Since $P$ is a path by Claim~\ref{clm:path-components}, it has two end vertices $u$ and $u'$. 
    Then at most two edges of $M$ incident to $z_u$ or $z_{u'}$.
    By Claim~\ref{clm:k3-same-edge} when $k=3$, and since $r\ge 3$ when $k\ge 5$, there exists $ab\in M$ disjoint from $\{z_u,z_{u'}\}$.
    Thus both $a$ and $b$ are adjacent to $u$ and $u'$.
    By Claim~\ref{clm:ends-only},
    \[
    N_G(a)\cap U=N_G(b)\cap U=\{u,u'\}.
    \]
    To show that $G[U]=P$, suppose that $G[U]$ has a component different from $P$. 
    Such a component has a vertex $x$ of degree at most one in $G[U]$. 
    If $x$ is isolated, then, since $d_G(x)\ge k+1=|W|$,  $N_G(x)=W$. 
    Otherwise, $x$ is a leaf and so it has exactly one non-neighbor in $W$ by Claim~\ref{clm:G[U]}. 
    In either case, $x$ has at most one non-neighbor in $W$. 
    Thus $x$ is adjacent to $a$ or $b$, a contradiction. Therefore $G[U]=P$.
    
    We now bound the order of $P$.
    Let $e_u$ and $e_{u'}$ be the edges in $M$ incident to $z_u$ and $z_{u'}$, respectively.
    Set $B:=V(e_u)\cup V(e_{u'})$.
    Claim~\ref{clm:k3-same-edge} gives $|B|= k-1$ when $k=3$, while $|B|\le 4\le k-1$ when $k\ge 5$.
    Hence
    \[ |B|\le \min \{k-1,4\}.\]  
    Moreover, by Claim~\ref{clm:ends-only}, $N_G(x)\cap U=\{u,u'\}$ for every $x\in W\setminus B$.  
    Thus every internal vertex $y$ of $P$, if it exists, satisfies $N_G(y)\cap W \subseteq B$, so 
    \[
    k+1\le d_G(y)=2+|N_G(y)\cap W|\le 2+|B| \le 2+\min\{k-1,4\}.
    \]
    Thus, if $P$ has an internal vertex, then  $k\in \{3,5\}$, $|B|=k-1$, and $N_G(y)\cap W=B$ for every internal vertex $y$ of $P$.

    Suppose that $P$ has at least four vertices.
    Let $x$ and $y$ be two consecutive internal vertices of $P$.
    Since $xy$ is not $k$-removable in $H$, there is a  set $S\subseteq V(H)\setminus\{x,y\}$ with $|S|\le k-1$ such that $xy$ is a cut edge of $H-S$. 
    Since $N_G(x)\cap W=N_G(y)\cap W=B$, $B\subseteq S$.
    Therefore $S=B$.
    Since $|B|=k-1=2(|M|-1)$, there exists $ab \in M$ with $\{a,b\}\cap B=\emptyset$.
    In particular, $a$ is adjacent to both $u$ and $u'$. 
    Then $H-B-xy$ contains a path from $x$ to $y$ as follows: follow $P$ from $x$ to $u$, use the edges $ua$ and $au'$, and follow $P$ from $u'$ to $y$.
    It is a contradiction that $xy$ is a cut edge of $H-B$.
    \end{proof}

    Finally, we derive a contradiction.
    Suppose $G[U]\cong P_2$. 
    Then $|V(G)|=|W|+2=k+3$ and $\delta(G)=k+1=|V(G)|-2$ by Claim~\ref{clm:G[U]}.
    Since $|V(G)|/2=(k+3)/2=m$ and $|V(G)|-3=k$, by Lemma~\ref{lem:dense-matching}, $G$ has a $k$-removable matching of size $m$, a contradiction.
    Suppose that $G[U]\cong P_3$ and $k\in \{3,5\}$.
    Then $|V(G)|\in \{7,9\}$ and $\delta(G)=k+1=|V(G)|-3$.
    Thus by Lemma~\ref{lem:n-3}, $G$ has a $k$-removable matching of size $m$, a contradiction.
\end{proof}

\section{Concluding remarks}

Li, Zhou, Fujita, and Mao~\cite{LZFM26+} introduced the parameter $f(k,\delta)$ as follows. For integers $\delta>k\ge1$, let $f(k,\delta)$ be the largest integer $t$ such that every $k$-connected graph $G$ with $|V(G)|\ge2\delta$ and $\delta(G)\ge\delta$ contains a $k$-removable matching of size $t$. They proved $f(k,\delta)\ge \lceil (\delta +1)/2 \rceil$ when $\delta\ge3k-1$, and obtained bounds for $f(2,\delta)$ and $f(k,k+1)$.
They asked for the exact value of $f(k,\delta)$ in general.
Our main theorem extends their results.
\begin{corollary}\label{cor:f}
For all integers $\delta>k\ge1$ with $(k,\delta)\ne(1,2)$,
\[
    f(k,\delta)\ge
    \left\lceil\frac{\delta+1}{2}\right\rceil.
\]
\end{corollary}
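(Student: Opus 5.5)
We deduce Corollary~\ref{cor:f} directly from Theorem~\ref{thm:main} with $m=\lceil(\delta+1)/2\rceil$. Fix integers $\delta>k\ge 1$ with $(k,\delta)\ne(1,2)$. Let $G$ be a $k$-connected graph with $|V(G)|\ge 2\delta$ and $\delta(G)\ge\delta$. It suffices to show that $G$ has a $k$-removable matching of size $m$.

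First we check the degree hypothesis of Theorem~\ref{thm:main}. We have $\delta(G)\ge\delta\ge k+1$. Also $2m-2\le\delta$: if $\delta$ is odd, then $m=(\delta+1)/2$ and $2m-2=\delta-1$; if $\delta$ is even, then $m=(\delta+2)/2$ and $2m-2=\delta$. Hence $\delta(G)\ge\max\{k+1,2m-2\}$.

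Next we rule out the two exceptions.

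\emph{The exception $G\cong K_{2m-1}$.} Here $|V(G)|=2m-1\le\delta+1$. Since $\delta\ge 2$, we have $\delta+1<2\delta\le|V(G)|$, a contradiction.

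\emph{The exception $(k,m)=(1,2)$ with $G$ a cycle.} Here $k=1$ and $\lceil(\delta+1)/2\rceil=2$ force $\delta\in\{2,3\}$. If $\delta=3$, then $G$ has minimum degree at least $3$ and is not a cycle. If $\delta=2$, then $(k,\delta)=(1,2)$, which is excluded by assumption.

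Therefore Theorem~\ref{thm:main} yields a $k$-removable matching of size $m$ in $G$. Since $G$ was arbitrary, $f(k,\delta)\ge m$.

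The only point needing care is the parity computation $2m-2\le\delta$. This is where the even case is tight, and there the exception $K_{\delta+1}$ is excluded precisely by the order hypothesis $|V(G)|\ge 2\delta$.
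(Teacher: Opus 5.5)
Your proposal is correct and follows the paper's proof essentially verbatim: you set $m=\lceil(\delta+1)/2\rceil$, check $2m-2\le\delta$ and $k+1\le\delta$, rule out $K_{2m-1}$ because its order $2m-1\le\delta+1<2\delta$, and note that the cycle exception can only arise when $\delta=2$. Your explicit parity computation and the justification $\delta\ge 2$ simply spell out steps the paper states briefly.
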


\begin{proof}
Set $m=\lceil(\delta+1)/2\rceil$. Then $2m-2\le\delta$ and $k+1\le\delta$. 
The complete graph excluded in Theorem~\ref{thm:main} has order $2m-1\le\delta+1<2\delta$, so it does not belong to the class defining $f(k,\delta)$. 
Since a cycle has minimum degree $2$, it can belong to the class defining $f(k,\delta)$ only when $\delta=2$.
Therefore Theorem~\ref{thm:main} gives the lower bound $\lceil (\delta +1)/2 \rceil$ unless $(k,\delta)=(1,2)$.
\end{proof}

With the upper bound $f(k,k+1)\le k$ from~\cite{LZFM26+}, Corollary~\ref{cor:f} implies
\[
    \left\lceil\frac{k+2}{2}\right\rceil
    \le f(k,k+1)\le k
\]
for every $k\ge2$. In particular, we determine $f(3,4)=3$.


\bigskip
\noindent 
\textbf{Declaration of generative AI use.}
\par\smallskip
\noindent
During the preparation of this work, the author used OpenAI's GPT-5.6 Sol in order to generate an initial proof of Lemma~\ref{lem:n-3} and to provide grammatical and editorial suggestions on parts of the manuscript. After using this tool, the author independently verified the mathematical arguments, reviewed and revised all AI-assisted text, and takes full responsibility for the correctness of the proof and the content of the published article.

\end{document}